\newtheorem{thm}{Theorem}
\newtheorem{lem}[thm]{Lemma}
\newtheorem{prop}[thm]{Proposition}
\newtheorem{conj}[thm]{Conjecture}
\newtheorem{prob}[thm]{Problem}
\theoremstyle{definition}
\newtheorem{defn-lem}[thm]{Definition/Lemma}
\theoremstyle{remark}
\newcommand{\cP}{\mathcal{P}}
\newcommand{\ignore}[1]{}
\newcommand{\dd}{\ensuremath d_{1}}
\newcommand{\gnpd}{\ensuremath G_{n,p,d}}
\newcommand{\er}{Erd\H{o}s-R\'{e}nyi\xspace}
\newcommand{\pr}{\ensuremath \mathbb{P}}
\newcommand{\PP}{\mathbb{P}}
\newcommand{\E}{\mathbb{E}}
\newcommand{\reminder}[1]{}  % Keep this line if you don't want reminders
\newcommand{\rel}[2]{\ensuremath #1 \rightsquigarrow #2}
\title{\vspace*{-.7in}Evolution of locally dependent random graphs}
\author{Joshua Brody$^1$, Pat Devlin$^1$, Aditi Dudeja$^2$, Emmi Rivkin$^1$}
\date{%
  $^1$ Swarthmore College, Swarthmore, PA, United States\\%
  $^2$ University of Salzburg, Salzburg, Austria\\ \ \\%
  May 15, 2024
}
\begin{document}
\maketitle

%% This makes the little AMS subject codes
\renewcommand{\thefootnote}{\fnsymbol{footnote}}
\footnotetext{AMS 2020 subject classification: 05C80, 05C40, 60C05, 68R10}
%\footnotetext{Key words and phrases: random graphs, local dependence, threshold functions}
\setcounter{footnote}{0}
\renewcommand{\thefootnote}{\arabic{footnote}}

\begin{abstract}
In this paper we study $d$-dependent random graphs---introduced by Brody and Sanchez~\cite{mario_paper}---which are the family of random graph distributions where each edge is present with probability $p$, and each edge is independent of all but at most $d$ other edges.  For this random graph model, we analyze degree sequences, jumbledness, connectivity, and subgraph containment.  Our results mirror those of the classical Erd\H{o}s--R\'enyi random graph, which are recovered by specializing our problem to $d=0$, although we show that in many regards our setting is appreciably more nuanced.  We survey what is known for this model and conclude with a variety of open questions.
\end{abstract}

\section{Introduction}
Since the seminal papers of Erd\H{o}s and R\'enyi \cite{erdds1959random, erdHos1960evolution}, random graphs have been among the most researched topics in discrete mathematics and theoretical computer science.  Random graphs have had a wide range of applications including computer science, discrete math, statistical mechanics, and modelling.  By far the most well-studied random graph distribution is the \textit{Erd\H{o}s--R\'enyi model} which is a random graph on $n$ vertices where each edge is present independently with probability $p$.  After decades of active research, this distribution is now quite well understood---see texts \cite{alon-spencer, bollobas_book,jlr-book}---and in fact recent tools for understanding this model have become so sophisticated as to render many low-level questions almost trivial \cite{expectationThresholds, jinyoung-expectation}.

Although the Erd\H{o}s--R\'enyi random graph is the most popular random graph distribution in the literature, it is unable to model a number of important settings where the edges depend on each other, and a wide variety of other random graph models have been proposed over the years, see e.g., the textbook \cite{frieze} or the survey \cite{super-survey}.  Of particular relevance for us is \cite{alon-Nussboim}, where Alon and Nussboim considered $k$-wise independent random graphs.  In this model, each edge is present with probability $p$, but the independence assumption is loosened to say merely that any subset of $k$ edges is independent (with no assumptions about larger sets).  In that setting, Alon and Nussboim were able to establish a number of results including a careful estimation of eigenvalues.  However, because the family of $k$-wise independent graph distributions is relatively poorly behaved, their results were necessarily weaker than those for the Erd\H{o}s--R\'enyi graph.

Motivated by an application to number-on-the-forehead communication complexity, Brody and Sanchez \cite{mario_paper} introduced the \textit{locally dependent} random graph model, which is the focus of the current paper.  In this model, a $d$-dependent random graph $\gnpd$ is any random graph distribution on $n$ vertices where each edge appears with probability $p$, and each edge depends on at most $d$ others.  

Said more precisely, a \emph{dependency graph} for a set of random variables $X_1,\ldots, X_n$ is a graph $H=(V,E)$ where vertices correspond to the random variables, and edges correspond to dependencies:  each $X_i$ is independent of all $X_j$ except those for which $(i,j)\in E$.  Variables $\{X_i\}$ are $d$-\emph{locally dependent} if there exists a dependency graph for $\{X_i\}$ with maximum degree $d$.  
%
%for each edge $e$, there is a set $S(e)$ of size at most $d$ such that the %presence of $e$ is independent of the presence of the edges outside of $S(e) %\cup \{e\}$. 
This notion of \textit{local dependence} coincides with that considered in both the well-known Lov\'asz local lemma \cite{lll,moser-tardos} as well as Janson's inequality \cite{janson}.  A %$d$-dependent random graph is precisely one where the directed dependency graph has maximum out-degree at most $d$ (note, the nodes of this dependency graph correspond to the edges of the complete graph on $n$ vertices).
%
%A 
$d$-dependent random graph $\gnpd$ is a random graph distribution on $n$ vertices, where each edge is present with probability $p$ and the set of edges are $d$-locally dependent.

The family of $d$-dependent random graphs is a relatively new but natural model, which in some sense interpolates between the Erd\H{o}s--R\'enyi random graph ($d=0$) and every other distribution of random graphs ($d = \genfrac(){0pt}{2}{n}{2}$), provided each edge appears with probability $p$.

One of the particularly interesting aspects of this model is that because it encompasses an entire family of distributions, it does not typically exhibit thresholds for monotone properties in the simple way that the Erd\H{o}s--R\'enyi model does.  For example, if $G$ is an Erd\H{o}s--R\'enyi random graph, then viewing $p$ as a function of $n$, we have that $\PP(G  \text{ is connected})$ tends to 0 if $p \ll \ln(n)/n$, and the probability tends to 1 if $p \gg \ln(n)/n$ (see \cite{bollobas_book} for this and much more precise results).

In the setting of $d$-dependent random graphs, it's unreasonable to expect such a simple result would be true since we are considering an entire \textit{family of distributions} simultaneously.  That is to say, in our setting a typical \textit{threshold} result ought to have three regimes separated by distinct lower and upper thresholds, $L(n,d) \leq U(n,d).$  Namely, (i) if $p \ll L(n,d)$, then all $d$-dependent distributions are very unlikely to have the desired property; (ii) if $p \gg U(n,d)$, then all distributions are very likely to have the desired property; and (iii) for intermediate values of $p$, there are some $d$-dependent distributions that are likely to have the property and others that are not.  The gap---if any---between (i) and (ii) would necessarily be some function of $n$ and $d$ which (by a classical result of \cite{thresholds}) essentially vanishes when $d=0$.  One of the main results of this paper addresses these lower and upper thresholds for connectivity.  We discuss this further in our conclusion section.

As an example of a related phenomenon, in \cite{mario_paper} Brody and Sanchez initiated the study of $d$-dependent random graphs, where their main result is the following:
\begin{prop}[Theorems 1 and 3 of \cite{mario_paper}]\label{prop:clique number} Say $d/\sqrt{n} \ll p \leq 1/4$.  There are positive constants $C_1, C_2$ such that for any $d$-dependent random graph $G \sim \gnpd$, with probability tending to 1
\[
C_1 \dfrac{\ln(n)}{\ln(1/p)} \leq \text{(clique number of $G$)} \leq C_2 \dfrac{(d+1)\ln(n)}{\ln(1/p)}.
\]
\end{prop}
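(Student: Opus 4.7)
The plan is a union bound that adjusts the \er calculation to account for dependencies. For a fixed $k$-subset $T \subseteq V$, the $\binom{k}{2}$ edges of the complete graph on $T$ induce a subgraph of the edge-dependency graph with maximum degree at most $d$, so a greedy argument produces an independent set $J$ of size $|J| \geq \binom{k}{2}/(d+1)$; the edge indicators $\{X_e : e \in J\}$ are then mutually independent by the definition of a dependency graph, yielding $\PP(T \text{ is a clique}) \leq p^{\binom{k}{2}/(d+1)}$. Summing this bound over the $\binom{n}{k} \leq n^k$ choices of $T$ with $k = \lceil 3(d+1)\ln n/\ln(1/p)\rceil$ makes the expected number of $k$-cliques vanish, so the upper bound follows by Markov---the same calculation as for \er, but with $\binom{k}{2}$ replaced by $\binom{k}{2}/(d+1)$ in the exponent.

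\textbf{Lower bound.} The plan is to locate a large vertex subset $S' \subseteq V$ on which the induced random subgraph is genuinely \er, and then invoke the classical clique bound. Draw $S \subseteq V$ uniformly at random of size $s = \lceil c(n/d)^{1/3}\rceil$ for a small constant $c>0$. A pair of distinct edges adjacent in the edge-dependency graph spans $3$ or $4$ distinct vertices, and since there are at most $d\binom{n}{2}$ such pairs globally, the expected number of them with both edges entirely inside $S$ is $O(ds^3/n)=O(c^3)$. A first-moment/deletion argument therefore produces a deterministic $S' \subseteq V$ of size $\Theta(s)$ in which no two edges of $K_{S'}$ are adjacent in the dependency graph; the $\binom{|S'|}{2}$ edge indicators are then mutually independent, and $G[S']$ is distributed exactly as the \er graph $G(|S'|, p)$. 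Because $d \ll p\sqrt n$, we have $s \gg (\sqrt n/p)^{1/3}$, which is polynomial in $n$, and the classical second-moment clique bound gives $\omega(G) \geq \omega(G[S']) \geq (2-o(1))\ln|S'|/\ln(1/p) \geq C_1 \ln n/\ln(1/p)$ with probability tending to $1$, for some absolute constant $C_1>0$ (concretely, $C_1 = 1/3 - o(1)$).

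\textbf{Main obstacle.} The technically delicate point is the implication ``no two edges in $S'$ are adjacent in the dependency graph $\Rightarrow$ the corresponding edge indicators are mutually independent.'' This is stronger than pairwise independence and relies on the standard Lov\'asz-local-lemma-style formulation in which each $X_i$ is independent of the joint distribution of all its non-neighbors (not merely pairwise independent from each one). Once this ingredient is granted, everything else is routine: the upper bound is an adjusted first-moment calculation, and the lower bound reduces to the classical \er clique bound applied to $G[S']$, with the hypothesis $d \ll p\sqrt n$ being used precisely to ensure $|S'|$ is polynomially large in $n$.
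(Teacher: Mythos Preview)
Your upper bound is exactly the argument the paper sketches: extract an independent set of size $\binom{k}{2}/(d+1)$ in the edge-dependency graph and run the first-moment calculation on $\binom{n}{k}p^{\binom{k}{2}/(d+1)}$.

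Your lower bound, however, takes a genuinely different route from the one the paper attributes to Brody--Sanchez.  The paper describes their argument as ``estimating the mean and variance of a related random variable and applying Azuma's inequality''---that is, a direct second-moment/concentration computation carried out on (a variant of) the clique count in the full graph $G$.  You instead reduce to the classical \er setting: using only the dependency structure (which is deterministic), you locate a vertex subset $S'$ of size $\Theta((n/d)^{1/3})$ on which all $\binom{|S'|}{2}$ edge indicators are mutually independent, and then quote the standard $G(m,p)$ clique bound on $G[S']$.  Your reduction is more elementary and isolates cleanly where the hypothesis $d \ll p\sqrt{n}$ enters---it is precisely what forces $|S'|$ to be polynomial in $n$, so that $\ln|S'| \geq (\tfrac{1}{6}-o(1))\ln n$.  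The price is a weaker constant: you obtain $C_1 = \tfrac{1}{3}-o(1)$ because you discard all but $n^{1/6}$-many vertices before invoking the \er bound, whereas a direct variance argument on $G$ need not throw away vertices and can in principle do better.  The ``main obstacle'' you flag---that an independent set in the dependency graph yields \emph{mutually} (not merely pairwise) independent indicators---is exactly the right subtlety; it is resolved by the LLL-style formulation the paper adopts, in which each edge indicator is independent of the joint distribution of its non-neighbors, and the same point underlies the paper's treatment of ``internally uncorrelated'' copies in the subgraph-containment theorem.
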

The upper bound follows simply by noting that the expected number of cliques on $k$ vertices is at most ${n \choose k} p^{{k \choose 2}/(d+1)}$, since any collection of $m$ edges must contain a subset of size at least $m/(d+1)$ whose presence are independent events.  Their lower bound was much more involved, and it amounted to estimating the mean and variance of a related random variable and applying Azuma's inequality.  Interestingly, this shows that when $d$ is not too large (say $d \ll p \sqrt{n}$), the clique number of a $d$-dependent random graph cannot be fundamentally lower than what it would be in the Erd\H{o}s--R\'enyi case.  They also exhibit graphs with clique number roughly $C \sqrt{d+1} \ln(n)/\ln(1/p)$.  It is unknown if the upper bound in Proposition \ref{prop:clique number} can be attained.

The authors of \cite{mario_paper} used the above to prove the immediate corresponding inequalities regarding the chromatic number of $\gnpd$, which in turn gave improved bounds for the multiparty pointer jumping problem, a canonical problem in number-on-the-forehead communication complexity.  
%jesb: changing wording here
%
%They additionally noted some strange behavior of certain $d$-dependent random graphs when $d \geq 2n$.
They also noted that very little can be said about the clique number of dependent random graphs when $d = \Omega(n)$.
For instance, if $p = 1/2$, then the random bipartite graph (where each $x$ and $y$ are in the same partition with probability exactly $1/2$) would be a $2n$-dependent random graph which is always bipartite.  And its complement would be a $2n$-dependent random graph which is the disjoint union of two cliques (at least one of which must have at least $n/2$ vertices).  Thus, the lower bound of Proposition \ref{prop:clique number} cannot be true for \textit{all} values of $d$.

% jesb: punting this to conclusion section
%
%Nothing else has been published on this random graph model prior to our work.
%
\subsection*{Our results}
We now turn our attention to original results on $d$-dependent random graphs.  As this model is still relatively new, we focus on several of the most fundamental questions in random graph theory: degree sequence, jumbledness, connectivity, and subgraph containment.

Throughout, we say that an event happens \textit{almost surely} to mean that it happens with probability tending to 1 as the relevant parameter (typically $n$) tends to infinity.  We use $\ln$ to denote the natural (base $e$) logarithm.

Our first result is the fact that for many choices of parameters, almost surely every vertex has degree very nearly $np$.

\begin{restatable}[Degree concentration]{thm}{degreeConcentrationTheorem}\label{theorem:degree concentration}
Say $G \sim \gnpd$ is any $d$-dependent random graph distribution.  If $p \geq (d+1) \dfrac{\ln(n)}{n}$, then almost surely, every vertex of $G$ has degree between $np - 4\sqrt{np(d+1)\ln(n)}$ and $np + 4\sqrt{np(d+1)\ln(n)}$.
\end{restatable}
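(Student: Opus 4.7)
The plan is to fix an arbitrary vertex $v$, show that $\deg(v)$ concentrates around its mean $(n-1)p$, and then take a union bound over all $n$ vertices. Although the $n-1$ edge indicators at $v$ are not mutually independent, they can be partitioned into $d+1$ mutually-independent groups via a proper coloring of the local dependency graph, and each group can then be controlled by a classical Chernoff-type inequality.

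Specifically, let $H_v$ be the induced subgraph of the dependency graph on the $n-1$ edges incident to $v$; this inherits max degree at most $d$ from the full dependency graph. By the Hajnal--Szemer\'edi theorem, $H_v$ admits an \emph{equitable} proper $(d+1)$-coloring, partitioning its vertex set into classes $C_1,\dots,C_{d+1}$ with sizes $m_c \in \{\lfloor (n-1)/(d+1)\rfloor, \lceil (n-1)/(d+1)\rceil\}$. Within each class the indicators are mutually independent, so $S_c := \sum_{e \in C_c}\mathbf 1[e \in G]$ is a sum of $m_c$ independent Bernoulli$(p)$ variables with mean $\mu_c = m_c p \approx np/(d+1)$, and $\deg(v) = \sum_{c=1}^{d+1} S_c$.

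Next, set $T := 4\sqrt{np(d+1)\ln n}$ and allocate a deviation budget $t_c := T/(d+1)$ to each class, so $\sum_c t_c = T$. Bernstein's inequality then gives
\[
\PP\!\left[|S_c - \mu_c| \geq t_c\right] \leq 2\exp\!\left(-\frac{t_c^2}{2\mu_c + (2/3)t_c}\right).
\]
A direct computation yields $t_c^2/\mu_c \approx 16\ln n$, while the hypothesis $p \geq (d+1)\ln(n)/n$ forces $t_c/\mu_c \leq 4$; hence the $2\mu_c$ term dominates the denominator up to a constant factor, and the exponent is at least $c_0\ln n$ for some fixed $c_0 > 2$. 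Since $|\deg(v) - (n-1)p| \geq T$ requires some $|S_c - \mu_c| \geq t_c$, a union bound over the $d+1$ color classes and then over the $n$ vertices yields total failure probability at most $2n(d+1)n^{-c_0}$. Since $p \leq 1$ forces $d+1 \leq n/\ln n$, this is $o(1)$, and the gap between $(n-1)p$ and $np$ is at most $p \leq 1$, which is absorbed in the slack of the constant $4$.

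The main technical obstacle lies in step three: the \emph{equitability} of the coloring is genuinely necessary, because an arbitrary proper $(d+1)$-coloring could concentrate almost all edges into a single class, producing a Bernstein exponent like $-\ln(n)/(d+1)$---too weak when $d$ is large. A related subtlety is confirming that Bernstein's inequality produces a Gaussian-type exponent ($\sim -t_c^2/\mu_c$) rather than a Poisson-type exponent ($\sim -t_c$), which reduces precisely to the lower bound $np \geq (d+1)\ln n$ built into the hypothesis on $p$.
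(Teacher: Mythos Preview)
Your argument is correct, but it takes a different route from the paper's.  The paper simply quotes Janson's large-deviation inequality for sums of $d$-locally dependent indicators (stated earlier as Lemma~\ref{lemma:Janson inequalities}, equation~\eqref{Janson:Bernstein}), applies it once to $e(\{u\}, V\setminus\{u\}) = \deg(u)$ with $t = 4\sqrt{np\dd\ln n}$, and union bounds over the $n$ vertices.  You instead unpack the machinery: equitably $(d+1)$-color the local dependency graph at $v$ via Hajnal--Szemer\'edi, apply classical Bernstein to each of the $d+1$ independent blocks, and take a double union bound over blocks and vertices.  Both approaches yield the same Gaussian-type exponent $\sim t^2/(\dd \mu)$; in effect you are re-deriving the special case of Janson's inequality that the paper invokes as a black box.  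Your route is more self-contained in that it avoids citing Janson, but trades this for Hajnal--Szemer\'edi (itself a nontrivial extremal theorem) and an extra factor of $\dd$ in the failure probability, which you correctly absorb using $\dd \leq n/\ln n$.

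One small comment on your final paragraph: the claim that equitability is ``genuinely necessary'' is a slight overstatement.  It is necessary \emph{within your per-block union-bound framework}, for exactly the reason you identify.  But Janson's own proof of the inequality in \cite{janson} uses an \emph{arbitrary} proper $(\dd)$-coloring and combines the blocks via H\"older's inequality on the moment generating function, $\E\big[\prod_j e^{sS_j}\big] \le \prod_j \big(\E[e^{s\dd S_j}]\big)^{1/\dd}$, which makes the class sizes disappear entirely.  So the equitability is an artifact of combining blocks by union bound rather than at the MGF level; either device works, and the paper's approach sidesteps the issue by citing the finished inequality.
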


In fact, this can be generalized in a certain sense by arguing that with high probability any $d$-dependent random graph exhibits a \textit{jumbledness} property that the number of edges between any two subsets of vertices is roughly what it would be in the Erd\H{o}s--R\'enyi setting.  Jumbledness, first introduced by Thomason \cite{thomason}, is a convenient graph property that in many ways allows such graphs to be treated as Erd\H{o}s--R\'enyi graphs \cite{krivelevich-survey}.

\begin{restatable}[Jumbledness]{thm}{jumblednessTheorem}\label{theorem:jumbledness}
Suppose $C \geq 10$, and $G \sim \gnpd$ is drawn from any $d$-dependent random graph distribution.  Further suppose that almost surely the maximum degree of $G$ is at most $(C^2/14)np$---which would be implied if $p \geq 14/C^2$ or $p \geq (d+1) \ln(n) / n$.  Then almost surely, all subsets of vertices $A,B$ simultaneously satisfy
\[
\Big| e(A,B) - p |A| |B| \Big | \leq C \sqrt{|A||B|np (d+1)},
\]
where $e(A,B)$ counts the number of edges of $G$ having one endpoint in $A$ and the other in $B$---with the usual convention that edges contained in $A \cap B$ are counted twice.
\end{restatable}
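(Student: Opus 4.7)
The plan is to prove the jumbledness bound via a concentration-plus-union-bound argument, using the max-degree hypothesis to handle the smallest pairs where the concentration is too weak on its own. First, apply a greedy proper coloring to the dependency graph $H$ (whose vertices are the potential edges of $G$ and whose maximum degree is at most $d$): this partitions $\binom{V}{2}$ into $d+1$ color classes $E_1, \dots, E_{d+1}$ such that the indicators $\{X_e\}_{e \in E_i}$ are mutually independent within each class. For fixed $A, B \subseteq V$, write $e(A,B) = \sum_{i=1}^{d+1} S_i$, where $S_i$ counts the color-$i$ edges between $A$ and $B$ (with the doubling convention for $A \cap B$); each $S_i$ is then a sum of independent Bernoulli$(p)$ variables.

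Next, apply H\"older's inequality to the moment generating function,
\[
\E\!\left[\exp\!\bigl(\lambda\, e(A,B)\bigr)\right] \leq \prod_{i=1}^{d+1}\bigl(\E[\exp((d+1)\lambda S_i)]\bigr)^{1/(d+1)},
\]
and combine with the standard MGF bound for sums of independent Bernoullis. Optimizing over $\lambda$ yields the Chernoff-type tail bound
\[
\Pr\!\bigl(|e(A,B) - p|A||B|| \geq t\bigr) \leq 2\exp\!\left(-\frac{\mu}{d+1}\,\phi^*(t/\mu)\right),
\]
where $\mu = p|A||B|$ and $\phi^*(x) = (1+x)\log(1+x) - x$ is the Chernoff rate function, which behaves like $x^2/2$ for small $x$ and like $x\log x$ for large $x$.

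Setting $t = C\sqrt{|A||B|np(d+1)}$, I would split into regimes and union-bound over the $\binom{n}{|A|}\binom{n}{|B|}$ pairs of each size. In the dense regime $|A||B| \gtrsim n(d+1)/p$, we have $t \lesssim \mu$ and the Bernstein (quadratic) tail of $\phi^*$ yields an exponent of order $C^2 n$, easily beating the $4^n$ union count for $C \geq 10$. In the intermediate regime where $t \gg \mu$, the large-deviation $x\log x$ tail of $\phi^*$ instead yields an exponent of order $(t/(d+1))\log(t/\mu)$, which can be shown to dominate the entropy cost $|A|\log(en/|A|) + |B|\log(en/|B|)$ under the hypothesis $p \geq (d+1)\log(n)/n$. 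Finally, for the sparsest pairs the RHS $C\sqrt{|A||B|np(d+1)}$ is already comparable to $1$, and the max-degree hypothesis $\Delta(G) \leq (C^2/14)np$ combined with the trivial bound $e(A,B) \leq \min(|A|,|B|)\cdot\Delta$ verifies the inequality deterministically.

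The main obstacle is the careful case analysis to show that every pair $(A,B)$ is absorbed by one of these regimes. In particular, the intermediate regime forces us to use the \emph{full} large-deviation $x\log x$ form of the Chernoff rate $\phi^*$ rather than only its coarser Bernstein $x^2/2$ form; without this sharpening, the entropy of small-to-moderate subsets swamps the concentration exponent when $p$ is close to the threshold $(d+1)\log(n)/n$, and the straightforward union bound fails.
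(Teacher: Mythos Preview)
Your approach is essentially the paper's. The coloring-plus-H\"older step you describe is precisely how Janson's large-deviation inequality for locally dependent sums is proved, and the paper simply cites that inequality (its Lemma~\ref{lemma:Janson inequalities}, in both the Bernstein form \eqref{Janson:Bernstein} and the $\varphi$ form \eqref{Janson:with phi}) rather than re-deriving it; your three regimes---deterministic via max degree, Bernstein/quadratic tail, and $x\log x$ tail---match the paper's families $\mathcal{F}_0,\mathcal{F}_1,\mathcal{F}_2$.

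One point to tighten: in the intermediate regime you should not appeal to the hypothesis $p \geq (d+1)\ln(n)/n$ directly, since the theorem must also cover the alternative $p \geq 14/C^2$ (indeed, any $p$ for which the max-degree bound holds). The paper instead uses that any pair not handled deterministically satisfies $t \leq (C^2/14)\,|A|\,np$, i.e.\ $\sqrt{|A|np/((d+1)|B|)} \geq 14/C$, and this inequality---rather than the hypothesis on $p$---is what makes the $x\log x$ exponent beat the entropy cost. Define your ``sparsest'' regime to be exactly the pairs where the deterministic bound fires, and the needed lower bound on $t/(d+1)$ in the intermediate regime then comes for free from membership in the complement.
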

In the language of pseudorandom graphs, the above result shows that the $d$-dependent random graph is almost surely $(p, C\sqrt{np(d+1)})$-jumbled.  The next example shows that this is best possible up to the choice of $C$ and that we must have $C \geq 1$.

\begin{restatable}[Jumbledness example]{thm}{jumblednessExample}\label{example: jumbledness}Suppose $np \to \infty$ and $d < 0.99n$.  Then there is a $d$-dependent random graph distribution such that almost surely, there are sets $A$ and $B$ where
\[
e(A,B) - p|A||B| > (1-o(1))(1-p)\sqrt{1-d/n} \sqrt{|A||B|np (d+1)}.
\]
\end{restatable}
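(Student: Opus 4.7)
The plan is to exhibit an explicit $d$-dependent distribution in which the entire dependency budget is concentrated into a single ``column'' direction of a bipartite structure, and then to choose $A$ and $B$ aligned with that direction. First I would partition $V = U \sqcup W$ with $|U| = d+1$ and $|W| = n - d - 1$ (both positive since $d < 0.99 n$). Inside $U$ and inside $W$ all edges are independent $\mathrm{Bernoulli}(p)$. For each $w \in W$ I draw an independent $\mathrm{Bernoulli}(p)$ variable $Y_w$ and declare the cross edge $\{u, w\}$ to be present if and only if $Y_w = 1$. It is then routine to verify that each edge has the correct marginal probability, and that a cross edge $\{u, w\}$ is independent of every other edge except for the $|U|-1 = d$ remaining cross edges sharing the vertex $w$; in particular the construction is $d$-dependent.

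With this distribution, I would take $A := U$ and the data-dependent set $B := \{w \in W : Y_w = 1\}$. Since $A$ and $B$ are disjoint and every cross edge from $A$ to $B$ is present by the definition of $B$, we immediately obtain $e(A, B) = |A| \cdot |B| = (d+1)|B|$, and hence
\[
e(A, B) - p|A||B| \;=\; (1-p)(d+1)|B|.
\]
Now $|B| \sim \mathrm{Bin}(n-d-1, p)$ with mean $(n-d-1)p \geq 0.01\, np - p \to \infty$, so a standard concentration inequality gives $|B| = (1+o(1))(n-d-1)p$ almost surely.

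Next I would substitute $|A| = d+1$ into the right-hand side of the theorem, which becomes $(1-p)(d+1)\sqrt{1-d/n}\sqrt{|B|np}$. The ratio of our deviation to this target simplifies to $\sqrt{|B|/((n-d)p)}$, which almost surely equals $\sqrt{(1+o(1))(n-d-1)/(n-d)} = 1-o(1)$, using $n-d \to \infty$. This establishes the desired inequality almost surely.

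The main obstacle is identifying the right construction rather than the computation itself. A seemingly natural ``blow-up'' attempt---partitioning $V$ into $k \approx n/\sqrt{d+1}$ blocks of size $\sqrt{d+1}$ and perfectly correlating all edges between each pair of blocks---reduces the problem to the jumbledness of an \er graph on $k$ vertices, but this only saturates the target up to a $(d+1)^{1/4}$ loss. Concentrating the whole dependency budget along a single direction removes that loss, and the critical trick is to let $B$ itself be random so that \emph{all} of the cross edges from $A$ to $B$ are deterministically present.
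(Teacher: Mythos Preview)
Your construction and argument are essentially identical to the paper's: both partition the vertices into a block of size $d+1$ and its complement, fully correlate the cross edges incident to each vertex of the complement, take $A$ to be the small block and $B$ to be the (random) set of complement vertices adjacent to it, and finish with a concentration estimate for $|B|$. The only cosmetic differences are notation and that the paper invokes Chebyshev explicitly rather than a generic concentration inequality.
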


We now turn our attention to connectivity.

\begin{restatable}[Connectivity]{thm}{connectivityTheorem}\label{theorem:connectivity}
Let $G \sim \gnpd$ be any $d$-dependent random graph distribution.
\begin{itemize}
    \item[(i)] Suppose $f(n)$ is any function for which $f(n) \to \infty$.  If $p > (d+1)\dfrac{\ln(n) + f(n)}{n}$, then $G$ is almost surely connected.
    \item[(ii)] If $np \to 0$, then almost surely $G$ will not be connected.
\end{itemize}
\end{restatable}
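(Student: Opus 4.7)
\textbf{Proof plan for Theorem \ref{theorem:connectivity}.}

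Part (ii) is immediate from Markov's inequality: a connected graph on $n$ vertices has at least $n-1$ edges, whereas $\E|E(G)| = \binom{n}{2}p \leq n^2p/2 = o(n)$, so
\[
\PP(G \text{ is connected}) \;\leq\; \frac{\E|E(G)|}{n-1} \;=\; O(np) \;\to\; 0.
\]

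For part (i) I would run the classical Erd\H{o}s--R\'enyi connectivity argument with a single modification to handle the dependencies. For each subset $S \subseteq V$ with $1 \leq |S| = k \leq n/2$, the event that no edge of $G$ crosses between $S$ and $V \setminus S$ requires all $k(n-k)$ potential crossing edges to be absent. These crossing edges are not independent, but because each depends on at most $d$ others, the dependency graph restricted to them has maximum degree at most $d$. Greedily extracting an independent set of that graph (pick any edge, discard it and its $\leq d$ neighbors, repeat) produces a subcollection $\mathcal{M}$ of at least $k(n-k)/(d+1)$ crossing edges whose absences are mutually independent events. Therefore
\[
\PP(\text{no edge of } G \text{ crosses } S) \;\leq\; (1-p)^{k(n-k)/(d+1)} \;\leq\; \exp\!\bigl(-p\,k(n-k)/(d+1)\bigr).
\]

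Taking a union bound over $k$ and the $\binom{n}{k}$ choices of $S$, and substituting $p \geq (d+1)(\ln n + f(n))/n$, the factors of $d+1$ cancel and it suffices to show
\[
\sum_{k=1}^{\lfloor n/2 \rfloor} \binom{n}{k}\, \exp\!\bigl(-(\ln n + f(n))\,k(n-k)/n\bigr) \;=\; o(1),
\]
which is exactly the sum appearing in the classical Erd\H{o}s--R\'enyi proof. Using $\binom{n}{k}\leq (en/k)^k$, the logarithm of the $k$-th summand equals $-k\ln k + k + k^2(\ln n)/n - kf(n)(n-k)/n$; this is $-(1-o(1))kf(n)$ for $k=O(1)$, and splitting (say) at $k=\sqrt{n}$ to control the $k^2(\ln n)/n$ term against $-k\ln k$ shows it is bounded by $-c\,k$ throughout $2 \leq k \leq n/2$ for some $c>0$ once $n$ is large. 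The resulting geometric sum is $o(1)$. The only real work here is this slightly fiddly but routine bookkeeping; the dependency structure enters only through the benign factor of $(d+1)$, which is precisely absorbed by the hypothesis on $p$.
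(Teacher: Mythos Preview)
Your proposal is correct and matches the paper's proof essentially line for line: part (ii) by Markov on the edge count, and part (i) by greedily extracting $\geq k(n-k)/(d+1)$ mutually independent crossing edges, bounding the cut-empty probability by $(1-p)^{k(n-k)/(d+1)}$, and union-bounding over $1\le k\le n/2$. The paper is terser about the final summation (it just asserts the bound collapses to a geometric series with ratio $\exp[C(2-f(n))]$), whereas you spell out the split at $k=\sqrt{n}$, but the argument is the same.
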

As for the optimality of each of these bounds, we have:

\begin{restatable}[Connectivity examples]{thm}{connectivityExample}\label{theorem:connectivity examples}
There are distributions with the following properties.
\begin{itemize}
    \item[(i)] There is a $d$-dependent random graph distribution with the property that for each fixed $\varepsilon > 0$, if $p < (1-\varepsilon) (d+1) \dfrac{\ln(n/\sqrt{d+1})}{n}$ and $d\ln(n)/n \to 0$, the graph is almost surely not connected.
%    \item[(ii)] For any reals $0 < a< b$, there is a constant $C>0$ such that if $d^{1.5} /n \in (a, b)$, then there is a $d$-dependent random graph distribution, $H_2$, with the property that if $p < C \dd \dfrac{\ln(n)}{n}$, then $\PP(\text{$H_2$ is connected}) \to 0$.
    \item[(ii)] There is a $d$-dependent random graph distribution (namely, the Erd\H{o}s--R\'enyi graph) such that for any function $f(n) \to \infty$, if $p > \dfrac{\ln(n) +f(n)}{n}$, then the graph is almost surely connected.
%    \item[(iv)] If $p > 1/(d+1)$ and $(d+1) < n/2$, then there is a $d$-dependent random graph distribution which never has any isolated vertices.  Moreover, that random graph is connected with probability tending to 1. 
\end{itemize}
\end{restatable}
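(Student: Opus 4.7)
My plan is to address (i) and (ii) separately. Part (ii) follows immediately from the classical \er connectivity theorem: since $\gnp$ is vacuously $0$-dependent, the hypothesis $p > (\ln n + f(n))/n$ with $f(n)\to\infty$ is precisely the threshold guaranteeing almost-sure connectivity \cite{bollobas_book}.

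For part (i), I would construct an explicit $d$-dependent distribution via \emph{tournament batching}: order the vertices $v_1 < v_2 < \cdots < v_n$, and for each $v_i$, partition the out-edges $\{v_i, v_j\}$ (with $j > i$) into batches of size at most $d+1$, each turned on simultaneously by an independent $\mathrm{Bernoulli}(p)$ coin. Each edge lies in a single batch, so the distribution is $d$-dependent. The key calculation is
\[
\Pr[v_i \text{ is isolated}] \;=\; (1-p)^{\lceil (n-i)/(d+1)\rceil + (i-1)},
\]
where the first term arises from the out-batches at $v_i$ being off and the second from the $i-1$ in-edges $\{v_j, v_i\}$ (with $j<i$) each living in a distinct batch at $v_j$ which must also be off. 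This attains the universal maximum $(1-p)^{(n-1)/(d+1)}$ at $i=1$ and grows geometrically in $i$; summing and using $d\ln n/n \to 0$ to control the geometric ratio gives $\E[\#\text{isolated}] \asymp e^{-pn/(d+1)}\cdot (d+1)/(pd)$, which diverges once $p < (1-\varepsilon)(d+1)\ln(n/\sqrt{d+1})/n$.

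To upgrade divergence of the mean into almost-sure existence of an isolated vertex (forcing disconnection of $G$), I plan to run a Chebyshev / second-moment argument. The isolation events of $v_i$ and $v_k$ share very few batches---at most one batch is forced to be shared (the one at $v_{\min(i,k)}$ containing $\{v_i, v_k\}$), and only a handful of others can be shared by coincidence of the partition---so the pairwise covariances should be $O(p)$ times the product of the marginals, giving $\Var/\E^2 \to 0$. I expect the main obstacle to be handling the full regime $d\ln n/n \to 0$, since both the geometric sum and the covariance bound become delicate when $d$ is a nontrivial power of $n$; if necessary I would splice in a ``group'' construction (partitioning the vertices into groups of size $\sqrt{d+1}$ with each super-edge toggling the $d+1$ internal edges together) to cover any remaining parameter regimes.
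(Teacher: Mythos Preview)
Your treatment of (ii) is fine and matches the paper.

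For (i), the tournament-batching construction has a genuine gap: it does not produce enough candidate isolated vertices once $d$ grows with $n$. The issue is asymmetry. Only $v_1$ has \emph{all} of its incident edges sitting in batches of size $d+1$; for $v_i$ with $i>1$, the $i-1$ incoming edges $\{v_j,v_i\}$ lie in $i-1$ \emph{distinct} batches (one per source $v_j$), so isolating $v_i$ forces $i-1$ extra independent coins to fail. Your own formula reflects this: the exponent $(n-i)/(d+1)+(i-1)$ increases essentially linearly in $i$, so effectively only the first $O(1/p)$ vertices contribute, and your expectation $\E[\#\text{isolated}]\asymp e^{-pn/(d+1)}\cdot (d+1)/(pd)$ evaluates at $p=(1-\varepsilon)(d+1)\ln(n/\sqrt{d+1})/n$ to roughly $n^{\varepsilon}/\bigl((d+1)^{(1+\varepsilon)/2}\ln n\bigr)$. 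This fails to diverge whenever $d\gg n^{2\varepsilon/(1+\varepsilon)}$, which is most of the regime $d\ln n/n\to 0$ when $\varepsilon$ is small. So the first-moment step, not the second-moment step, is where the argument breaks.

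The paper's construction avoids this asymmetry by designating a small candidate set $A$ of size $n/(\ln n\sqrt{d+1})$ and batching the edges from each $x\in A$ to $B=V\setminus A$ into blocks of size $d+1$; thus \emph{every} vertex of $A$ has all its $A$--$B$ edges controlled by only $\approx n/(d+1)$ coins. Edges \emph{inside} $A$ are handled by exactly the $\sqrt{d+1}$-grouping you mention as a fallback: $A$ is partitioned into cells $A_1,\dots,A_t$ of size $\sqrt{d+1}$, and all $A_i$--$A_j$ edges are toggled by a single coin. One then shows (by a short first/second-moment count on the ``on'' pairs $(i,j)$) that a positive fraction of $A$ has no edges inside $A$; conditioned on this, the isolation events in $A$ are genuinely independent with probability $(1-p)^{n/(d+1)}$ each, giving a divergent binomial mean and a clean Chebyshev finish. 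In other words, your ``splice-in'' idea is not a patch for edge cases but the heart of the construction, and the tournament ordering should be dropped in favor of a symmetric two-sided batching around a small candidate set.
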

Thus, item (i) of Theorem \ref{theorem:connectivity} is nearly best possible up to a factor of $1/\sqrt{d+1}$ appearing in the logarithm.  In fact, if $d \leq n$, then the bounds given in item (i) of the above theorems differ by at most a multiplicative factor of 2.  As for the lower threshold, there is a gap where it is uncertain whether or not there are any almost surely connected $d$-dependent random graphs with $1/n \leq p \leq \ln(n)/n$.  We feel that closing this gap would be of particular interest.  We discuss this general phenomenon and related open problems in the conclusion.

Finally, we conclude with a generalization of a result of Bollob\'as \cite{bollobas_subgraphs} addressing thresholds for when a $d$-dependent random graph is likely to contain small subgraphs.
\begin{restatable}[Subgraph containment]{thm}{subgraphTheorem}\label{theorem:subgraph counts}
Suppose $H = H_n$ is any sequence of graphs.  For each graph $\Gamma$, let $V(\Gamma)$ denote its vertex set, $E(\Gamma)$ its edge set, and let $f(\Gamma)$ denote \textit{its edge cover number}, i.e., the minimum number of edges in a set $A \subseteq \Gamma$ such that every edge of $\Gamma$ shares an endpoint with some edge of $A$.  Define the quantity
\[
\Phi(H) := \displaystyle \sum_{\emptyset \neq \Gamma \subseteq H} \left( \dfrac{20 |V(H)|}{n}\right)^{|V(\Gamma)|} \dfrac{(d+1)^{f(\Gamma)}}{p^{|E(\Gamma)|}},
\]
where the sum is taken over non-empty subgraphs of $H$.  Finally suppose $d |E(H)| |V(H)|/n < 0.9$.

For any $d$-dependent graph distribution $G \sim \gnpd$, the probability that $G$ contains no copies of $H$ is at most $10 \Phi(H)$.  In particular, define $m(H) = \max_{\emptyset \neq \Gamma \subseteq H} |E(\Gamma)| / |V(\Gamma)|$.  If $H$ is fixed and $p (n/(d+1))^{1/m(H)} \to \infty$, then $G$ almost surely contains a copy of $H$.
\end{restatable}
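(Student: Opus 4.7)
The plan is to apply Chebyshev's inequality to an appropriate count $Y$ of $H$-copies in $G$, generalizing Bollob\'as's classical variance argument for subgraph thresholds in the Erd\H{o}s--R\'enyi setting. The natural choice is to let $Y$ denote the number of ordered labeled embeddings $\sigma\colon V(H)\hookrightarrow V(G)$ that realize a copy of $H$ in $G$ with $\sigma(V(H)) \subseteq S$, where $S\subseteq V(G)$ is a uniformly random subset of size $m = 20|V(H)|$ (chosen independently of the random graph). Because $\{G \not\supseteq H\}\subseteq\{Y=0\}$, Chebyshev's inequality gives $\PP[G\not\supseteq H]\le \Var(Y)/\E[Y]^2$, and the goal is to show this ratio is at most $10\Phi(H)$.

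The first moment is $\E[Y]\approx m^{|V(H)|}p^{|E(H)|}$, since a fixed embedding lies in $S$ with probability $\approx (m/n)^{|V(H)|}$ (independent of the edges) and realizes a valid embedding with probability $p^{|E(H)|}$ by the marginal assumption. For the second moment, I would expand
\[
\E[Y^2]=\sum_{\sigma,\tau}\PP\bigl[\sigma(V(H))\cup\tau(V(H))\subseteq S\bigr]\cdot\PP[\sigma\text{ and }\tau\text{ both embed}],
\]
and group pairs $(\sigma,\tau)$ by their overlap subgraph $\Gamma\subseteq H$. Combining the count of such pairs (roughly $n^{2|V(H)|-|V(\Gamma)|}$ with a combinatorial constant in $|V(H)|$) with the sampling probability $\approx (m/n)^{2|V(H)|-|V(\Gamma)|}$, and dividing through by $\E[Y]^2\approx m^{2|V(H)|}p^{2|E(H)|}$, one obtains the $(20|V(H)|/n)^{|V(\Gamma)|}\cdot p^{-|E(\Gamma)|}$ structure appearing in $\Phi(H)$, with the combinatorial slack absorbed into the overall factor of $10$.

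The main obstacle is the joint embedding probability in the $d$-dependent setting. In the Erd\H{o}s--R\'enyi case this equals $p^{2|E(H)|-|E(\Gamma)|}$, but here edges of $E(\sigma)\cup E(\tau)$ may be correlated through the dependency graph. The target estimate is
\[
\PP[\sigma\text{ and }\tau\text{ both embed}]\le p^{2|E(H)|-|E(\Gamma)|}\cdot(d+1)^{f(\Gamma)}.
\]
To prove this, fix an edge cover $A\subseteq E(\Gamma)$ of size $f(\Gamma)$; the intuition is that any cross-correlation between the edges of $E(\sigma)\setminus E(\Gamma)$ and $E(\tau)\setminus E(\Gamma)$ must be routed through the dependency neighborhood of an edge incident to $A$, so an iterative exposure of the edges layer-by-layer can be made to pay at most one multiplicative factor of $(d+1)$ per edge of $A$. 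The hypothesis $d|E(H)||V(H)|/n<0.9$ enters here to keep the conditional probabilities encountered during the induction uniformly bounded away from $1$. Combining this estimate with the earlier bookkeeping and applying Chebyshev then yields $\PP[G\not\supseteq H]\le 10\Phi(H)$. For the asymptotic consequence, one uses $f(\Gamma)\le|V(\Gamma)|$ (any maximum matching of $\Gamma$ is an edge cover, of size at most $|V(\Gamma)|/2$) together with $|E(\Gamma)|\le m(H)|V(\Gamma)|$ to bound each summand of $\Phi(H)$ by $\bigl(20|V(H)|(d+1)/(np^{m(H)})\bigr)^{|V(\Gamma)|}$, which tends to $0$ under $p(n/(d+1))^{1/m(H)}\to\infty$; since there are only boundedly many summands for fixed $H$, this gives $\Phi(H)\to 0$, so $G$ almost surely contains a copy of $H$.
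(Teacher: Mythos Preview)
Your plan has two genuine gaps, and both stem from not engaging with the dependency structure in the way the paper does.

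\textbf{First moment.} You claim a fixed embedding $\sigma$ realizes a copy of $H$ with probability $p^{|E(H)|}$ ``by the marginal assumption.'' That is false in the $d$-dependent model: the marginals are $p$, but the joint law of the edges of $\sigma(H)$ is unconstrained beyond local dependence, so $\PP[\sigma\text{ embeds}]$ can be anywhere between $0$ and $p^{|E(H)|/(d+1)}$. In particular it can be much smaller than $p^{|E(H)|}$, which kills the lower bound on $\E[Y]$ and hence Chebyshev. The paper's fix is essential and you have omitted it: one restricts attention to \emph{internally uncorrelated} copies of $H$ (those whose edges are pairwise non-reliant), for which $\E[X_S]=p^{|E(H)|}$ holds exactly. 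The hypothesis $d|E(H)||V(H)|/n<0.9$ is used precisely to show that at least a $1/10$ fraction of all copies are internally uncorrelated; it is not, as you suggest, a device for controlling conditional probabilities in some exposure argument.

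\textbf{Where $(d+1)^{f(\Gamma)}$ comes from.} You group pairs $(\sigma,\tau)$ by their \emph{overlap} subgraph and then assert $\PP[\sigma,\tau\text{ both embed}]\le p^{2|E(H)|-|E(\Gamma)|}(d+1)^{f(\Gamma)}$, with a hand-wave about routing correlations through the dependency neighborhood of an edge cover. I do not see how to make that inequality true, and in the paper the mechanism is completely different: there $\Gamma$ is \emph{not} the overlap of $\sigma$ and $\tau$, but the set of edges of $T$ that rely (in the dependency-graph sense) on some edge of $S$. The factor $(d+1)^{f(\Gamma)}$ then appears in the \emph{count} $|\mathcal{B}_\Gamma|$ of such $T$---because choosing the edges of a minimum edge cover of $\Gamma$, each from a reliance-neighborhood of size at most $d+1$, pins down $V(\Gamma)$---while the covariance bound is simply $\mathrm{Cov}(X_S,X_T)\le p^{2|E(H)|-|E(\Gamma)|}$, obtained by conditioning on $X_S=1$ and using that the edges of $T$ outside $\Gamma$ are independent of $E(S)$. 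Your overlap-based grouping cannot see the dependency graph at all, so there is no reason the edge-cover number of the overlap should govern anything.

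Finally, the random subset $S$ of size $20|V(H)|$ does not do what you want: running your own bookkeeping, the $\Gamma$-term of $\Var(Y)/\E[Y]^2$ comes out proportional to $(20|V(H)|)^{-|V(\Gamma)|}p^{-|E(\Gamma)|}$, which is larger than the target $(20|V(H)|/n)^{|V(\Gamma)|}p^{-|E(\Gamma)|}$ by a factor of $n^{|V(\Gamma)|}$. The $(20|V(H)|/n)^{|V(\Gamma)|}$ in $\Phi(H)$ arises in the paper from the ratio $\binom{n}{|V(H)|-|V(\Gamma)|}/\binom{n}{|V(H)|}$ when bounding $|\mathcal{B}_\Gamma|/|\mathcal{H}|$, not from any subsampling of vertices.
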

Note that if $d=0$, this recovers the result of Bollob\'as for the Erd\H{o}s--R\'enyi random graph.  In that setting (when $H$ is fixed) the Erd\H{o}s--R\'enyi random graph almost surely has no copies of $H$ when $p n^{1/m(H)} \to 0$.  As an example in our model, when $H=K_3$ is a triangle, we have
\[
\Phi(K_3) = C \left( \dfrac{(d+1)^1}{n^2 p} + \dfrac{(d+1)^2}{n^3 p^2} + \dfrac{(d+1)^2}{n^3 p^3} \right).
\]
Thus, if $p \gg (d+1)^{2/3} / n$, then any $d$-dependent random graph will almost surely contain a triangle.

\subsection*{Outline of paper}
We begin in Section \ref{sec:notation} with common notation used throughout.  Continuing in Section \ref{section:examples}, we provide several examples of $d$-dependent graph distributions including a proof of Theorems \ref{example: jumbledness} and \ref{theorem:connectivity examples}.  Next, in Section \ref{section:degrees and jumbled} we prove Theorems \ref{theorem:degree concentration} and \ref{theorem:jumbledness}.  Section \ref{section:connectivity proof} is devoted to a proof of Theorem \ref{theorem:connectivity}, and we prove Theorem \ref{theorem:subgraph counts} in Section \ref{section:subgraph counts}.  Finally, we list a variety of open questions and conjectures in the conclusion.  \reminder{For possible ease of reading, we remind the reader of each theorem prior to its proof.}

\section{Common notation and useful facts}\label{sec:notation}
Throughout, we use $\ln(x)$ to denote the natural log of $x$.  We say an event happens almost surely to mean that it happens with probability tending to 1 as the relevant parameter (typically $n$) tends to infinity. Following notation of Janson's inequality \cite{janson}, we let $\dd$ denote $d+1$.

As a common approximation, we use the fact that for all $x$ we have $1+x \leq e^{x}$ and moreover if $-1/2 \leq x$ we have $e^{-2x} \leq e^{x-x^2} \leq 1+x$.  Combining these, we have that if $x \to 0$, then $1+x = e^{x(1+o(1))}$.  Moreover, for any integers $1 \leq k \leq m$, we use the fact $(m/k)^k \leq {m \choose k} \leq (em/k)^k$ (for details on all of these, see, e.g., the first chapter of \cite{bollobas_book}).

Let us now recall the following versions of Janson's inequality \cite{janson} specialized for our situation.
\begin{lem}[Janson]\label{lemma:Janson inequalities}
Let $G$ be any $d$-dependent random graph, and let $A, B$ be any subsets of vertices.  Define $e(A,B)$ to be the number of edges of $G$ between $A$ and $B$, with the usual convention that edges inside $A \cap B$ are counted twice.  Define $\mu = \E[e(A,B)] = |A||B|p$.  For all $t > 0$ we have
\begin{eqnarray}
%\PP\left( \Big | e(A,B) - |A| |B|p \Big| \geq t \right) &\leq& 2 \exp \left[ \dfrac{-2 t^2}{2 \dd |A| |B| }\right] = 2 \exp \left[ \dfrac{-t^2}{\dd |A| |B| }\right],\label{Janson:Hoeffding}\\
\PP\left( \Big | e(A,B) - \mu \Big| \geq t \right) &\leq& 2 \exp \left[ \dfrac{-8 t^2}{50 \dd (\mu + t/3) }\right],\label{Janson:Bernstein}\\
\PP \left( \Big | e(A,B) - \mu \Big | \geq t \right) &\leq& 2\exp \left[ \dfrac{-\mu}{2 \dd } \varphi\left(\dfrac{4t}{5\mu} \right) \right],\label{Janson:with phi}
\end{eqnarray}
where $\varphi(x) = (1+x)\ln(1+x)-x$.
\end{lem}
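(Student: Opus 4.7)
The plan is to derive both inequalities as specializations of Janson's inequality \cite{janson} applied directly to
\[
X = e(A,B) = \sum_{(u,v) \in A \times B} Y_{uv},
\]
where $Y_{uv}$ is the indicator that $\{u,v\}$ is an edge of $G$ (so that pairs in $A \cap B$ are automatically counted twice, matching the stated convention). Since $G \sim \gnpd$, each indicator $Y_{uv}$ is a Bernoulli$(p)$ random variable, and by the definition of $d$-dependence it is independent of all but at most $d$ of the other indicators in the sum.

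The only substantive input from the local-dependence hypothesis is to bound the parameter that plays the role of the variance proxy in Janson's inequality. Writing
\[
\bar\Delta = \mu + \sum_{(u,v)\sim(u',v')} \E[Y_{uv} Y_{u'v'}],
\]
with the sum over ordered pairs of dependent indicators, the trivial bound $\E[Y_{uv}Y_{u'v'}] \leq \E[Y_{uv}] = p$ together with the at-most-$d$ neighbors per indicator gives
\[
\bar\Delta \leq \mu + d|A||B|p = \dd \mu.
\]
This single estimate is where the factor $\dd$ appearing in the denominators of both stated bounds originates; once it is in place, the problem is essentially reduced to the independent case.

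With $\bar\Delta \leq \dd\mu$ established, the Bernstein-type inequality \eqref{Janson:Bernstein} follows by substituting into a two-sided Bernstein-form Janson bound of the general shape $2\exp[-ct^2/(\bar\Delta + c't)]$, and the Bennett-type inequality \eqref{Janson:with phi} follows analogously by substituting into the Chernoff/Bennett-form bound of shape $2\exp[-\bar\Delta^{-1}\mu\,\varphi(\cdot)]$. The specific numerical constants ($8/50$, $1/3$, $4/5$, $1/2$) reflect a convenient but not optimal packaging chosen for later applications, and they emerge after tracking the two-sided versions through their derivation.

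The one genuine (but modest) obstacle is that Janson's inequality is most commonly stated as a one-sided lower-tail bound, whereas we need the two-sided statements $|e(A,B) - \mu| \geq t$. To handle this one either invokes the companion upper-tail inequality for sums of locally dependent indicators or, equivalently, works directly with the moment-generating-function estimates that underlie both tails simultaneously (together with a union bound over the two signs at the cost of a factor of $2$ in front of the exponential). I expect the main care to go into selecting a version of Janson's inequality that yields the stated constants after this two-sided packaging.
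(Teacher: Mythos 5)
Your overall plan---write $e(A,B)$ as a sum of $|A||B|$ edge indicators and quote Janson's concentration inequality for sums of partly dependent variables, with the $d$-dependence entering only through a bound on the dependence parameter---is the same as the paper's. But the key quantitative step is mishandled, and the slip is exactly the point the lemma's constants are built around. Because edges inside $A\cap B$ appear twice in the sum, a summand $Y_{uv}$ is \emph{not} independent of ``all but at most $d$'' of the other summands: its duplicate $Y_{vu}$ is literally the same random variable, and each of the at most $d$ edges it depends on may itself contribute two terms. So the dependency degree within the summation can be as large as $2d+1$, and your claimed estimate $\bar\Delta \leq \dd\,\mu$ is false whenever $A\cap B\neq\emptyset$ (already for $d=0$ and $A=B=V$ the duplicated terms alone give $\bar\Delta\approx 2\mu$). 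The paper's proof consists of precisely this bookkeeping: it takes the dependency graph of the \emph{summands} to have degree less than $2\dd$ (the bracketed remark about doubling), and that factor of $2$ is where the $50 = 2\cdot 25$ and the $2\dd$ in the stated bounds come from.

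Separately, the parameter you call the variance proxy, $\bar\Delta = \mu + \sum_{\alpha\sim\beta}\E[Y_\alpha Y_\beta]$, belongs to the \emph{other} Janson inequality---the lower-tail correlation inequality for increasing events built from independent coordinates. That inequality does not apply here (the edges of a $d$-dependent graph are not functions of independent coordinates), and there is no off-the-shelf ``two-sided Bernstein-form'' bound with $\bar\Delta$ that you can cite; even in the classical independent-coordinate setting the upper-tail analogue of that shape fails. The result that does apply, and the one the paper invokes, is Theorem 2.3 of the cited Janson paper on sums of partly dependent random variables, whose parameters are the (fractional) chromatic number of the dependency graph of the summands, the sum of variances $\sum_\alpha \mathrm{Var}(Y_\alpha)\leq\mu$, and the uniform bound $Y_\alpha-\E Y_\alpha\leq 1$; substituting chromatic number at most $2\dd$ and applying the bound to both $X$ and $-X$ yields exactly the stated constants. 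So the route is right in spirit, but as written it both undercounts the dependence and appeals to a theorem that is not available in this setting; fixing it amounts to replacing $\bar\Delta\leq\dd\mu$ by the degree bound $2\dd$ on the dependency graph of the summands.
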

\begin{proof}
These inequalities are a direct application of Theorem 2.3 of \cite{janson} in the following way.  We may treat $e(A,B)$ as a sum of $|A||B|$ indicator random variables (with edges inside $A \cap B$ appearing twice in this sum), where each random variable has mean $p$ and each random variable depends on fewer than $2 \dd$ terms in the summation [multiplying by 2 to account for the fact that some edges may have been duplicated in this sum].
\end{proof}

\section{Examples of $d$-dependent random graphs}\label{section:examples}
Here we provide examples of two different $d$-dependent random graph distributions and use them to prove Theorems \ref{example: jumbledness} and \ref{theorem:connectivity examples}.  In each case, for convenience we assume that all large numbers are integers (since the errors associated with rounding do not change the validity of our results).
\subsection*{Proof of Theorem \ref{example: jumbledness}: jumbledness example}
\reminder{\jumblednessExample*}
\begin{proof}
Partition the vertex set $V$ into sets $V = S \cup T$ where $|S| = \dd$.  For each $x \in T$, we fully correlate all the edges from $x$ to $S$.  That is, $x$ is adjacent to all of $S$ with probability $p$ and otherwise it is adjacent to none of the vertices of $S$.  All other edges of this graph are independent.

The key is that every vertex of $S$ has the exact same set of neighbors in $T$.  Let $B \subseteq T$ denote the vertices of $T$ that are adjacent to at least one (and hence every) vertex of $S$.  Then $|B|$ is a binomial random variable with mean $|T|p$ and variance $|T|p(1-p)$.  Applying Chebyshev's inequality
\begin{eqnarray*}
\PP\Big(|B| \leq |T|p (1-(np)^{-1/4}) \Big) &=& 
\PP\Big(|B| - \E[|B|] < - |T|p (np)^{-1/4} \Big)  \leq \dfrac{\text{Var}(|B|)}{\Big ( |T|p (np)^{-1/4} \Big) ^2}\\
&=& \dfrac{|T|p(1-p)}{(|T|p)^2/\sqrt{np}} \leq \dfrac{\sqrt{np}}{|T|p} = \dfrac{1}{(1-\dd/n) \sqrt{np}} \to 0.
\end{eqnarray*}
Thus, almost surely $|B| \geq np(1-\dd/n)(1-o(1))$, and as a result
\begin{eqnarray*}
e(S,B) - |S||B|p &=& (1-p)|S||B| = (1-p) \sqrt{|S| |B| \cdot (|S||B|)}\\
%&\geq& (1-p) \sqrt{|S| |B| \cdot \dd np (1-\dd/n)(1-o(1))}\\
&\geq& (1-o(1))(1-p)\sqrt{1-d/n} \sqrt{|S||B|np \dd},
\end{eqnarray*}
as we were trying to show.
\end{proof}

\subsection*{Proof of Theorem \ref{theorem:connectivity examples}: non-connected example}
\reminder{\connectivityExample*}

\begin{proof}Claim (ii) is a well-known result in random graphs---which also follows immediately from Theorem \ref{theorem:connectivity}.  Here we prove claim (i) of Theorem \ref{theorem:connectivity examples}.  For this, suppose $\varepsilon>0$ is fixed, $p \leq (1-\varepsilon) \dfrac{\dd \ln(n/\sqrt{\dd})}{n}$, and $\dd \ln(n)/n \to 0$, which in turn implies that $p \to 0$ as well.  We will prove that there is a $d$-dependent distribution which is almost surely not connected.

Partition the vertex set $V$ as $V = A \cup B$ where $|A| = \dfrac{n}{\ln(n) \sqrt{\dd}}$.  And partition $B$ into $m \leq n/ \dd$ sets $B_1, B_2, \ldots, B_{m}$ each of size $\dd$.  For each $x \in A$ and each $1 \leq j \leq m$, we fully correlate all the edges from $x$ to $B_j$, which is to say $x$ is adjacent to the entirety of $B_j$ with probability $p$ (and otherwise, there are no edges between $x$ and $B_j$).

Now we also partition $A$ into $t = |A| / \sqrt{\dd} = \dfrac{n}{\ln(n)\dd}$ sets $A_1, A_2, \ldots , A_{t}$ each of size $\sqrt{\dd}$.  For each $1 \leq i \leq j \leq t$, fully correlate all the edges between $A_i$ and $A_j$ (in the case $i=j$, this means each $A_i$ is either a complete graph or an empty graph).

Let $S \subseteq A$ denote the set of vertices in $A$ that have no edges to any other vertices of $A$.

\textbf{Claim:} With high probability $|S| \geq |A| \varepsilon/2$.

\textit{Proof of this claim:} For each $1 \leq i \leq j \leq t$, say the ordered pair $(i,j)$ is \textit{on} iff our random graph has edges between $A_i$ and $A_j$.  Each pair is `on' with probability precisely $p$, and these events are independent.  Moreover, there are precisely ${t \choose 2} + t = {t+1 \choose 2}$ pairs $(i,j)$ where $1 \leq i \leq j \leq t$.  Thus, the total number of `on' pairs is a binomial random variable with mean $p{t+1 \choose 2} < (1-\varepsilon)(t+1)/2$ and variance $p(1-p) {t+1 \choose 2}$.  So the chance that the number of `on' pairs is greater than $t(1-\varepsilon   /2)/2$ is at most (by Chebyshev's inequality) $C \varepsilon / (p {t+1 \choose 2})$.  And by Markov's inequality, this probability is also at most $C \varepsilon p {t+1 \choose 2} / t$.  The geometric mean of these upper bounds is $C \varepsilon / \sqrt{t}$, and since $t \to \infty$, at least one of these upper bounds must tend to 0.  So, almost surely, there are at most $t(1-\varepsilon/2)/2$ `on' pairs, meaning there are at least $t\varepsilon/2$ choices of $k$ where the set $A_k$ has no edges inside $A$.  Thus, $|S| \geq (t \varepsilon/2) \sqrt{\dd} = |A|\varepsilon/2$, proving the claim.

Now for each $x \in S$, we would have that $x$ is isolated in the graph iff there are no edges from $x$ to $B$.  Then \textit{after conditioning on $S$}, each vertex of $S$ is isolated with probability exactly $(1-p)^{m}$, and these events are independent.  Because $p \to 0$, we know $(1-p) = \exp(-p(1+o(1)))$.  Thus, if $I$ denotes the total number of isolated vertices in $S$, then $I$ is a binomial random variable with mean
\begin{eqnarray*}
\E[I] &=& |S|(1-p)^m = \exp[\ln|S| -(1+o(1))pm] \geq \exp[\ln(|A|\varepsilon/2) -(1+o(1))pn/\dd]\\
&\geq& \exp[\ln(n\varepsilon/(2\ln(n) \sqrt{\dd})) -(1+o(1))(1-\varepsilon)\ln(n/\sqrt{\dd})]\\% \geq \exp[(\varepsilon -o(1))/2 \ln(n) - \ln(2\ln(n))]\\
&\geq& \dfrac{\varepsilon(n/\sqrt{\dd})^{\varepsilon -o(1)}}{2 \ln(n)} \to \infty.
\end{eqnarray*}
Thus, by Chebyshev's inequality, we have
\[
\PP(I = 0) \leq \dfrac{\text{Var}(I)}{(\E[I])^2} = \dfrac{(1- (1-p)^m)}{\E[I]} \leq \dfrac{1}{\E[I]} \to 0.
\]
So this graph almost surely has an isolated vertex, meaning it is not connected.
\end{proof}

\section{Proofs of degree concentration and jumbledness}\label{section:degrees and jumbled}

\subsection*{Proof of Theorem \ref{theorem:degree concentration}: degree concentration}
\reminder{\degreeConcentrationTheorem*}
\begin{proof}
Suppose $t = 4 \sqrt{\dd np \ln(n)}$.  For each vertex $u$, consider $A = \{u\}$ and $B = V(G) \setminus \{u\}$.  The degree of $u$ is thus equal to $e(A,B)$, and we have
\[
\PP \Big( |e(A,B) - np| \geq t \Big) \leq \PP \Big( |e(A,B) - (n-1)p| \geq t-p \Big).
\]
By our choice of $t$---and the assumption that $p > \dd \ln(n)/n$---we see that $t > 4 \dd \ln(n) \geq 4 \ln(n)$, which means that for all sufficiently large values of $n$, we have $t-p > t-1 > 3.9 \sqrt{\dd np \ln(n)}$.  Moreover, we have $t-p < t \leq 4np$.  Applying Janson's inequality---our Lemma \ref{lemma:Janson inequalities}, \eqref{Janson:Bernstein}---yields
\begin{eqnarray*}
\PP \Big( |e(A,B) - np| \geq t \Big) &\leq& \PP \Big( |e(A,B) - (n-1)p| \geq 3.9 \sqrt{\dd np \ln(n)} \Big)\\
&\leq& 2 \exp \left( \dfrac{-(8/50) (3.9)^2 \dd np \ln(n)}{\dd ((n-1)p + t/3)}\right) \leq 2 \exp \left( \dfrac{-(8/50) (3.9)^2 \ln(n)}{1 + 4/3}\right)\\
&\leq& 2 n^{-1.04}.
\end{eqnarray*}
Summing over the $n$ choices for the vertex $u$, we see that the expected number of vertices whose degree is not in the desired range is at most $2 n^{-0.04}$, which tends to $0$.  Thus, the probability that there is such a vertex also tends to $0$.
\end{proof}

\subsection*{Proof of Theorem \ref{theorem:jumbledness}: jumbledness}\label{subsection:jumbledness proof}
\reminder{\jumblednessTheorem*}
Our proof is similar to a typical proof for the Erd\H{o}s--R\'enyi graph (e.g., Lemma 8 of \cite{haxell}), with the main exception being that we rely on Janson's inequality in lieu of a standard Chernoff bound.
\begin{proof}
For each pair of sets $(A,B)$, let us first establish some notation used throughout.  Say $|A| =a$, $|B| =b$, and define $\mu := abp$ and $t := C \sqrt{abnp\dd}$, where $C\geq 10$.

Say a pair of sets $(A,B)$ is \textit{bad} iff $\Big | e(A,B) - abp \Big| \geq t$.  We need only argue that with high probability, there are no bad sets satisfying $1 \leq a \leq b \leq n$.  For ease of presentation, we define the following families of sets:
\begin{itemize}
    \item $\mathcal{F} = \{(A,B) \ : \ 1 \leq a \leq b \ \ \ A,B \subseteq V\}$,
    \item $\mathcal{F}_0 = \left \{(A,B) \in \mathcal{F} \ : \ t > \dfrac{C^2}{14} anp \right \}$,
    \item $\mathcal{F}_1 = \left \{(A,B) \in \mathcal{F} \ : \ t < 12 \mu \dfrac{n/b}{\ln(en/b)} \right\}$,
    \item $\mathcal{F}_2 = \mathcal{F} \setminus (\mathcal{F}_0 \cup \mathcal{F}_1) = \left\{(A,B) \in \mathcal{F} \ : \ 12 \mu \dfrac{n/b}{\ln(en/b)} \leq t \leq  anp \dfrac{C^2}{14}\right\}$.
\end{itemize}
We argue that the expected number of bad sets in each of these families is $o(1).$

\paragraph*{Case 0, $\mathcal{F}_0$:} First suppose $(A,B) \in \mathcal{F}_0$.  This means we may assume $t > \dfrac{C^2}{14} anp$.  But by assumption, we know that with high probability, every vertex of $G$ has degree at most $(C^2 / 14) np$.  Therefore, $e(A,B) \leq a (C^2/14) np < t$.  And since $abp \leq anp \leq (C^2 /14) anp < t$, this means $|e(A,B) - abp| \leq \max\{e(A,B), abp\} < t.$  So because of the maximum degree condition, there are no bad sets in $\mathcal{F}_0$.

\paragraph*{Case 1, $\mathcal{F}_1$:} Now suppose $(A,B) \in \mathcal{F}_1$.  This means we may assume $t \leq 12 \mu \dfrac{n/b}{\ln(en/b)}$.  And we can say that $\mu + t/3 \leq \left(1 + \dfrac{12}{3} \cdot \dfrac{n/b}{\ln(en/b)} \right)\mu \leq \left(5 \cdot \dfrac{n/b}{\ln(en/b)} \right)\mu$.  So by using \eqref{Janson:Bernstein}, we have
\[
\pr(\text{$(A,B)$ is bad}) \leq 2 \exp \left[ \dfrac{-8 t^2}{50 \dd (\mu + t/3) }\right] \leq 2 \exp \left[ \dfrac{-8 t^2 b\ln(en/b)}{250 \dd n \mu}\right].
\]
For each $1 \leq b$, there are at most $\sum_{a=1}^{b} {n \choose a} {n \choose b} \leq \sum_{a=1} ^{b} (en/a)^a (en/b)^b \leq b(en/b)^{2b} \leq (en/b)^{3b}$ pairs of sets $(A,B) \in \mathcal{F}$ with $|B| = b$.  Thus, for all \((A,B)\in\mathcal{F}_1\), the expected number of bad sets is at most
\begin{eqnarray*}
2 \sum_{b=1}^{n} \exp[3b \ln(en/b)] \exp \left[ \dfrac{-8 t^2 b\ln(en/b)}{250 \dd n \mu}\right]
%&=& 2 \sum_{b=1}^{n} \exp \left[\left(3b \ln(en/b) - \dfrac{8 C^2 abpn\dd }{50 \dd \left(5 \dfrac{n/b}{\ln(en/b)} \right)abp} \right) \right]\\
&=& 2 \sum_{b=1}^{n} \exp \left[b \ln(en/b) \left(3 - \dfrac{8 C^2 }{250} \right) \right] \\ &\leq& 2 \sum_{b=1}^{n} \exp \left[-b \ln(en/b)/5\right],
\end{eqnarray*}
where the last inequality is justified by the fact that $C \geq 10$.  Since the value of this summation tends to $0$, this shows that the expected number of bad sets in $\mathcal{F}_1$ goes to 0.

\paragraph*{Case 2, $\mathcal{F}_2$:} For all $(A,B) \in \mathcal{F}_2$ we know $t/\mu \geq 12 \dfrac{n/b}{\ln(en/b)}$, meaning $\ln(t/\mu) \geq \ln(ne/b)+ \ln(12/e) - \ln(\ln(en/b))$.  Moreover, for all $1 \leq x$ we have $\ln(12/e)-\ln(\ln(ex))>-\ln(ex)/10$, which means $\ln(t/\mu) \geq 0.9 \ln(ne/b)$.  Since for all $y > 0$, we have $\phi(y) = (1+y)\ln(1+y)-y \geq (5y/8) \ln(5y/4)$, this yields

\[
\phi \left( \dfrac{4t}{5 \mu}\right) \geq \dfrac{1}{2} \dfrac{t}{\mu} \ln(t/\mu) \geq \dfrac{t}{\mu} \cdot \dfrac{(0.9)\ln(en/b)}{2} = (0.45) \dfrac{t \ln(en/b)}{\mu}.
\]

Therefore, for all $(A,B) \in \mathcal{F}_2$, by applying \eqref{Janson:with phi} we have
\begin{eqnarray*}
\PP (\text{$(A,B)$ is bad}) &\leq& 2\exp \left[ \dfrac{-\mu}{2 \dd } \varphi\left(\dfrac{4t}{5\mu} \right) \right] \leq 2\exp \left[ \dfrac{-(0.225)\mu}{\dd } \dfrac{t}{\mu} \ln(en/b) \right]\\
&=& 2\exp \left[ -(0.225)C b \ln(en/b) \sqrt{\dfrac{apn}{\dd b}} \right] \leq 2\exp \left[ -3.15 \cdot b \ln(en/b)\right],
\end{eqnarray*}
where the last inequality follows from the fact that $t \leq anp \dfrac{C^2}{14}$, which means $\dfrac{14}{C} \leq \sqrt{\dfrac{anp}{\dd b}}$.
%\patComment{Hi Emmi!  Please try to finish the argument in this case.  For each fixed $b$, the number of $(A,B)$ is at most $b {n \choose b}^2 \leq \exp(3b\ln(en/b))$.  So multiply the probability by this, and sum over $b =1 ... n$.  There's a chance the numbers don't quite work out because of the ``$C^2/ 10$" part of the statement $t \leq anp (C^2/10)$.  If you ultimately wish that the inequality instead said something like ``$t \leq anp (C^2/1000)$" that can be arranged.}

As before, for each value $b$, there are at most $\exp[3b \ln(ne/b)]$ pairs $(A,B) \in \mathcal{F}_2$.  And thus, the expected number of bad pairs $(A,B) \in \mathcal{F}_2$ is at most
\[
2 \sum_{b=1}^{n} \exp[3b \ln(en/b)] \exp \left[ -3.15 \cdot b \ln(en/b)\right] = 2 \sum_{b=1}^{n} \exp[-(0.15)b \ln(en/b)] = o(1).
\]

Therefore, since the expected number of bad pairs in $\mathcal{F}_0, \mathcal{F}_1,$ and $\mathcal{F}_2$ each tends to $0$, this means that the probability that there exists a bad pair also tends to $0$.
\end{proof}

\section{Proof of Theorem \ref{theorem:connectivity}: connectivity}\label{section:connectivity proof}
\reminder{\connectivityTheorem*}

\begin{proof}
First we note that claim (ii) of Theorem \ref{theorem:connectivity} is rather immediate.  Namely, a connected graph must have at least $n-1$ edges, but by Markov's inequality, $\PP [|E(G)| \geq n-1] \leq \dfrac{\E[|E(G)|]}{n-1} = np/2,$ which tends to 0 by assumption.

We now turn our attention to claim (i), whose proof is almost identical to that of the standard Erd\H{o}s--R\'enyi graph.  For this, call a set $A \subseteq V$ \textit{bad} iff $1 \leq |A| \leq n/2$ and $e(A, V \setminus A) = 0$.  In order for $e(A, V \setminus A)$ to be 0, all of the $|A| (n-|A|)$ edges leaving $A$ must be absent.  But among these, there must be at least $|A| (n-|A|) / \dd$ which are independent.  Therefore we have
\begin{eqnarray*}
\E[\text{\# bad sets}] &=& \sum_{A \subseteq V} \PP(A \text{ is bad}) = \sum_{a=1} ^{n/2} {n \choose a} (1-p)^{a(n-a)/\dd}\\
%&\leq& \sum_{a=1} ^{n/2} {n \choose a} \exp \left[ -p \dfrac{a(n-a)}{d+1} \right] \leq \sum_{a=1} ^{n/2} \left(\dfrac{en}{a} \right)^a \exp \left[ -p \dfrac{a(n-a)}{d+1} \right]\\
&\leq& \sum_{a=1} ^{n/2} \exp \left[a \ln(en/a) -p \dfrac{a(n-a)}{\dd} \right] \leq \sum_{a=1} ^{n/2} \left(\exp \left[C\left(2-f(n) \right) \right] \right) ^a,
\end{eqnarray*}
which tends to $0$ since $f(n) \to \infty$.  Thus there are almost surely no bad sets, and $G$ is connected.
\end{proof}

\section{Proof of Theorem \ref{theorem:subgraph counts}: subgraph containment}\label{section:subgraph counts}
\reminder{\subgraphTheorem*}
\begin{proof}
Following aspects of the approach of \cite{mario_paper}, we restrict our attention to the presence of subgraphs whose edge sets are suitably uncorrelated.  For this, we need to define an appropriate notion allowing us to deal with the possible dependence of edges.  Since $G$ is a $d$-dependent random graph, this means that for each edge $e$, there is some set $S(e)$ of size at most $d$ such that the presence of $e$ is independent of the set of edges outside of $S(e) \cup \{e\}$.  With this, we say $e_1$ relies on $e_2$ iff $e_1 \in S(e_2)$ or $e_1 = e_2$.  We write this as $\rel{e_1}{e_2}$

Let $\mathcal{H}$ denote the set of all copies of $H$ appearing in the complete graph on $n$ vertices.  For each $S \in \mathcal{H}$, we say $S$ is \textit{internally uncorrelated} iff none of the edges within $S$ rely on any of the others.  Let $\mathcal{H}' \subseteq \mathcal{H}$ denote the set of internally uncorrelated graphs $S$ (i.e., the internally uncorrelated copies of $H$ in the complete graph on $n$ vertices).

If we select an element $S \in \mathcal{H}$ uniformly at random, then the expected number of pairs of edges $(e_1,e_2)$ in $S$ where $e_1 \neq e_2$ and $\rel{e_1}{e_2}$ is at most
\[
\sum_{e_1} \sum_{ \rel{e_1}{e_2} } \PP \Big(\{e_1,e_2\} \subseteq E(S) \Big) \leq d |E(H)| \cdot \max_{e,f} \PP (f \in E(S) | e \in E(S)) \leq d |E(H)| \dfrac{|V(H)|}{n},
\]
where the last inequality is because the event $f \in E(S)$ conditioned on $e \in E(S)$ involves the inclusion of at least one more vertex in $S$, and inclusion of vertices in $S$ is negatively correlated.

Therefore, we have that the number of internally uncorrelated elements of $\mathcal{H}$ is at least
\[
|\mathcal{H}'| \geq |\mathcal{H}| \left( 1 - \dfrac{d |E(H)| |V(H)|}{n} \right) \geq |\mathcal{H}|/10,
\]
which is also essentially Lemma 11 of \cite{mario_paper}.

For each $S \in \mathcal{H}'$, let $X_S$ denote the indicator random variable for the event that all the edges of $S$ are present in the $d$-dependent random graph $G$, and let $X = \sum_{S \in \mathcal{H}'} X_S$. 
 By construction of $\mathcal{H}'$, for each $S \in \mathcal{H}'$, we have $\E[X_S] = p^{|E(H)|}$, so $\E[X] = |\mathcal{H}'| p^{|E(H)|}$.

Deviating from the approach in \cite{mario_paper}, we now directly turn our attention to estimating the mean and variance of $X$.  Suppose $S \in \mathcal{H}'$ is fixed.  For each subset of edges $\Gamma \subseteq S$, let $\mathcal{B}_{\Gamma}$ denote the collection of graphs $T \in \mathcal{H}'$ such that the edges of $T$ can be partitioned as $T_{0} \cup T_{1}$ where (1) $T_0$ is isomorphic to $\Gamma$, (2) each edge of $T_0$ relies on at least one edge of $S$, and (3) the edges of $T_{1}$ do not rely on any of the edges of $S$.

For each $\Gamma \subseteq S$, we can bound $|\mathcal{B}_{\Gamma}|$ as follows.  Let $f(\Gamma)$ denote the minimum number of edges in a subgraph of $\Gamma$ spanning the same vertex set (i.e., the \textit{edge cover number} of $\Gamma$).  Then $T_0$ can be chosen in at most $(2(d+1))^{f(\Gamma)}$ ways---because each edge in the minimal edge cover must be selected from a set of the form $\{e\} \cup M(e)$, and we can decide which vertex corresponds which end point in at most 2 ways.  Having done so, we could then specify $T_1$ by selecting the set of the remaining vertices in $V(T) \setminus V(T_0)$ and then selecting the edges in one of at most $|V(H)|! / |\text{Aut}(H)|$ ways.  Thus we have
\begin{eqnarray*}
|\mathcal{B}_{\Gamma}| &\leq& (2(d+1)) ^{f(\Gamma)} \cdot {n \choose {|V(H)| - |V(\Gamma)|}} \cdot \dfrac{|V(H)|!}{|\text{Aut}(H)|}\\
&=& (2(d+1)) ^{f(\Gamma)} \cdot \dfrac{{n \choose {|V(H)| - |V(\Gamma)|}}}{{n \choose |V(H)|}} \cdot |\mathcal{H}| \leq (2(d+1)) ^{f(\Gamma)} n^{-|V(\Gamma)|} \cdot \left(\dfrac{|V(H)|}{1-|V(H)|/n} \right)^{|V(\Gamma)|} \cdot |\mathcal{H}|\\
&\leq& (2(d+1)) ^{f(\Gamma)} n^{-|V(\Gamma)|} \cdot \left(10|V(H)|\right)^{|V(\Gamma)|} \cdot |\mathcal{H}|,
\end{eqnarray*}
where the last line is justified by the assumption $|V(H)|/n < 0.9$.

Finally, suppose $T \in \mathcal{B}_{\Gamma}$.  If $E(\Gamma) = \emptyset$, then $\text{Cov}(X_S, X_T) = 0$ since the edges in $T$ would not rely on any edges of $S$.  Otherwise, we have
\begin{eqnarray*}
\text{Cov}(X_S, X_T) &\leq& \E[X_S X_T] \leq p^{|E(S)|} \E[X_T | X_S = 1]\\
&\leq& p^{|E(S)|} p^{|E(T)| - |E(T_0)|} = p^{2|E(H)| - |E(\Gamma)|}.
\end{eqnarray*}
Thus, for $X = \sum_{S \in \mathcal{H}'} X_S$, we have
\begin{eqnarray*}
\text{Var}(X) &=&  \sum_{S, T \in \mathcal{H}'} \text{Cov}(X_S, X_T) \leq \sum_{S \in \mathcal{H}'} \sum_{\emptyset \neq \Gamma \subseteq E(H)} |\mathcal{B}_{\Gamma}| \cdot p^{2|E(H)| - |E(\Gamma)|}\\
&\leq& \sum_{S \in \mathcal{H}'} \sum_{\emptyset \neq \Gamma \subseteq E(H)} (2(d+1)) ^{f(\Gamma)} n^{-|V(\Gamma)|} \cdot \left(10|V(H)|\right)^{|V(\Gamma)|} \cdot |\mathcal{H}| \cdot p^{2|E(H)| - |E(\Gamma)|}\\
&=& \left(|\mathcal{H}'| p ^{|E(H)|} \right) ^2 \dfrac{|\mathcal{H}|}{|\mathcal{H}'|} \sum_{\emptyset \neq \Gamma \subseteq E(H)} \left( \dfrac{10 |V(H)|}{n}\right)^{|V(\Gamma)|} \dfrac{(2(d+1)) ^{f(\Gamma)}}{p^{|E(\Gamma)|}}.
\end{eqnarray*}
Since $\E[X] = |\mathcal{H}'| p^{|E(H)|}$ and $|\mathcal{H}'| \geq (0.1) |\mathcal{H}|$, we have by Chebyshev's inequality
\[
\PP(X=0) \leq \dfrac{\text{Var}(X)}{(\E [X])^2} \leq 10 \sum_{\emptyset \neq \Gamma \subseteq E(H)} \left( \dfrac{20 |V(H)|}{n}\right)^{|V(\Gamma)|} \dfrac{(d+1)^{f(\Gamma)}}{p^{|E(\Gamma)|}}. \qedhere
\]
\end{proof}

\section{Conclusion and Open Questions}
In this paper we study random graph properties when edges have limited local dependence.  In particular, we examine jumbledness, connectivity, and subgraph containment for $d$-dependent random graphs.  Many interesting open problems remain.  We highlight a few below.

As discussed in the introduction, tight threshold functions cannot exist as in the \er case \cite{thresholds}.  Instead, we believe the following more nuanced phenomenon should hold.
\begin{conj}[Existence of threshold functions]
  For every monotone graph property $\cP$ and all $d = O(n/\ln(n))$, there exist functions $L(n,d)$ and $U(n,d)$ such that the following hold:
  \begin{enumerate}
      \item[(i)] If $p \ll L(n,d)$ then for all $d$-dependent random graph distributions $G \sim \gnpd$, $G$ almost surely does not have property $\cP$.
      \item[(ii)] If $p \gg U(n,d)$ then for all $d$-dependent random graph distributions $G \sim \gnpd$, $G$ almost surely has property $\cP$.
      \item[(iii)] If $L(n,d) \ll p \ll U(n,d)$, then there exists a $d$-dependent random graph distribution $H_1$ and a $d$-dependent random graph distribution $H_2$ such that $H_1$ almost surely has property $\cP$ and $H_2$ almost surely does not have property $\cP$.
  \end{enumerate}
    
\end{conj}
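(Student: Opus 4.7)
WLOG I assume $\cP$ is a monotone increasing property (the decreasing case is handled symmetrically via graph complementation, which sends a $d$-dependent distribution at $p$ to a $d$-dependent distribution at $1-p$ and swaps increasing with decreasing properties). Define the extremal probabilities $\Phi_{\max}(n,p,d,\cP):=\sup_\mu \mu_p(\cP)$ and $\Phi_{\min}(n,p,d,\cP):=\inf_\mu \mu_p(\cP)$, where the supremum and infimum range over all $d$-dependent distributions $\mu$ on $n$ vertices with edge-probability $p$. Both are monotone nondecreasing in $p$: this follows from two couplings that preserve $d$-dependence, namely \emph{subsampling} (retain each edge of $G\sim\mu_p$ independently with probability $q$, giving a $d$-dependent distribution at $pq$) and \emph{augmentation} by an independent Erd\H{o}s--R\'enyi graph (which preserves $d$-dependence because the new edges share no dependencies with anything, moving the parameter to $1-(1-p)(1-q)$). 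I then define $L(n,d):=\sup\{p:\Phi_{\max}(n,p,d,\cP)\le 1/2\}$ and $U(n,d):=\inf\{p:\Phi_{\min}(n,p,d,\cP)\ge 1/2\}$.

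The central tool is an \emph{amplification lemma}: if $\mu$ is $d$-dependent at $p$, then $\mu^{\otimes k}$, the edge-wise union of $k$ independent copies of $\mu$, is $d$-dependent at $1-(1-p)^k\approx kp$. The crucial point is that independent copies of the \textit{same} distribution share the same dependency graph, so taking unions introduces no new dependencies. Moreover $\mu^{\otimes k}(\cP)\ge 1-(1-\mu(\cP))^k$ for monotone increasing $\cP$. This produces a sharp two-sided transition for $\Phi_{\max}$ across $L$: if $p\ll L$, choose $k\approx L/p\to\infty$, so that the amplified distribution lies at parameter at most $L$ and hence $\mu^{\otimes k}_p(\cP)\le 1/2$, forcing $\mu_p(\cP)\le 1-2^{-1/k}\to 0$ uniformly in $\mu$, giving (i). For $p\gg L$, a witness $\mu^*$ at $L$ with $\mu^*(\cP)\ge 1/2$ amplified by $k\approx p/L$ copies (and then augmented with independent edges to land exactly at parameter $p$) produces a $d$-dependent distribution with $\cP$-probability at least $1-2^{-p/L}\to 1$, which yields the first half of (iii).

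The main obstacle I anticipate is the analogous sharp transition for $\Phi_{\min}$ across $U$: part (ii) requires $\Phi_{\min}\to 1$ for $p\gg U$, and the second half of (iii) requires $\Phi_{\min}\to 0$ for $p\ll U$. Dualizing via $\neg\cP$, the natural construction is to \emph{intersect} $k$ independent copies, giving a $d$-dependent distribution at $p^k$ that amplifies $\mu(\neg\cP)$; but $p^k$ sits on the \textit{wrong} side of the current parameter, so this only yields an exponentially slow amplification that is insufficient to pin down a multiplicatively sharp threshold at $U$. Unlike the $\Phi_{\max}$ situation, there is no apparent way to boost a single adversarial distribution $\mu_p$ upward in $p$ while maintaining control of $\mu_p(\cP)$; the adversary chooses the structure of dependencies after seeing $p$, defeating the symmetric trick.

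To close this gap I would combine several approaches. First, apply Janson-type concentration (Lemma \ref{lemma:Janson inequalities}) to a well-chosen family of test events witnessing $\cP$ inside the adversarial $\mu_p$ near $U$; this already suffices for properties controlled by local subgraph counts as in Theorem \ref{theorem:subgraph counts}. Second, attempt a Friedgut--Bourgain--Kalai style sharp-threshold argument applied directly to the worst-case $d$-dependent distribution, which would leverage any symmetries of $\cP$ and suggests separating symmetric from generic cases. Third, use a compactness/diagonal argument on the monotone profiles $p \mapsto \Phi_{\max}(n,p,d,\cP)$ and $p \mapsto \Phi_{\min}(n,p,d,\cP)$ to extract the functions $L$ and $U$ with the weakest possible structural hypotheses. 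If the fully general statement resists, a natural fallback is to prove the conjecture for broad sub-classes---for instance symmetric monotone properties, or local subgraph properties---and leave the general case as a refined open problem.
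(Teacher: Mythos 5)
This statement is an open conjecture in the paper---the authors give no proof of it---so there is no argument of theirs to compare against, and your proposal should be judged on its own. The parts you do establish are sound: subsampling, augmentation by an independent \er graph, and the edge-wise union of independent copies of a single distribution all preserve the dependency graph (hence $d$-dependence), so the extremal functions $\Phi_{\max}$ and $\Phi_{\min}$ are monotone in $p$, and your amplification lemma correctly yields part (i) and the ``$H_1$'' half of part (iii) relative to your definition $L(n,d)=\sup\{p:\Phi_{\max}\le 1/2\}$. That is genuine partial progress: it shows the supremum profile $\Phi_{\max}$ has a coarse threshold in the required sense.

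The gap you flag, however, is not a technical loose end but the entire substance of the conjecture: part (ii) and the ``$H_2$'' half of (iii) require a sharp transition of the \emph{infimum} profile $\Phi_{\min}$ at $U$, and none of your proposed remedies closes it. Subsampling an adversarial $\mu_p$ down to parameter $U$ only shows $\mu_p(\cP)\ge 1/2$ for $p\ge U$, not $\to 1$; there is no analogue of the union trick because an arbitrary $d$-dependent distribution at $p$ cannot be decomposed into independent pieces at smaller parameters, and intersecting independent copies lands at $p^k$, far from the target parameter. The Friedgut--Bourgain--Kalai machinery is a theorem about product measures (or, at best, specific fixed families with symmetry), whereas here the adversary may choose a completely different dependency structure for every $n$ and every $p$, so there is no single measure or monotone family to which such a theorem applies; Janson-type arguments cover only properties governed by local counts (essentially Theorem \ref{theorem:subgraph counts}); and the compactness remark merely constructs candidate functions $L,U$ without producing the 0--1 behavior. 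Even the special case $\cP=$ connectivity of this transition is posed as open in the paper (Conjecture \ref{conj:lower-c-thr} and the surrounding discussion), which is a good indication that the missing step is the hard core of the problem rather than something a routine argument will supply. As written, your proposal proves a weaker statement (the $L$-side) and should present the $U$-side as open.
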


Moreover, we expect the difference between the lower and upper thresholds will be bounded by a function depending only on $d$.
\begin{conj}
    For all monotone graph properties $\cP$, there exists a function $f(d)$ depending only on $d$ such that $U(n,d)/L(n,d) = O(f(d))$.
\end{conj}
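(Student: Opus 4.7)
The plan is to reduce the conjecture to the \er case via the Bollob\'as--Thomason threshold theorem and to quantify the cost of local dependencies using a colouring of the dependency graph. Since the Erd\H{o}s--R\'enyi graph $G_{n,p}$ is itself a valid $d$-dependent distribution, writing $p_0(n)$ for the Erd\H{o}s--R\'enyi threshold for $\cP$ guaranteed by Bollob\'as--Thomason, we immediately have $L(n,d) \leq p_0(n) \leq U(n,d)$. The conjecture therefore reduces to bounding $p_0/L$ and $U/p_0$ separately by functions depending only on $d$, and the target $f(d)$ is the product of the two.

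For the upper bound $U(n,d) \leq f_1(d) \cdot p_0$, I would proceed as follows. Given any $d$-dependent $G \sim \gnpd$, properly colour the dependency graph on its edges with $d+1$ colours so that within each colour class the edges are mutually independent, each present with marginal probability $p$. By pigeonhole some colour class contains at least $|E(K_n)|/(d+1)$ edges, and its restriction is an \emph{independent} random subgraph of a large pre-determined edge set. For monotone increasing $\cP$, the goal is to show that such a slice of density $\asymp 1/(d+1)$ carrying edge-probability $p$ already forces $\cP$ once $p \gtrsim p_0 \cdot f_1(d)$; monotonicity of $\cP$ then lifts this to the full graph $G$. I would attempt this via a combination of the FKG inequality (to exploit monotonicity when comparing the slice to an \er graph on a suitably truncated vertex set) and a Janson-type second-moment bound adapted from the proof of Theorem~\ref{theorem:subgraph counts}.

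For the lower bound $L(n,d) \geq p_0/f_2(d)$, the heuristic is that the worst-case $d$-dependent distribution fully correlates disjoint bundles of $d+1$ edges, as in the constructions of Section~\ref{section:examples}. I would try to show that any such bundle construction delays the appearance of $\cP$ by at most a factor of $f_2(d)$ beyond the \er threshold, using a coupling between a $d$-dependent distribution and an \er graph at a modified density determined by the bundle structure. Concrete support for this comes from the specific cases established in this paper: Theorem~\ref{theorem:subgraph counts} gives a shift by a factor of at most $d+1$ for subgraph containment, and Theorems~\ref{theorem:connectivity} and~\ref{theorem:connectivity examples} together give $U/L = O(1)$ for connectivity throughout the range $d = O(n/\ln n)$.

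The principal obstacle is that the conjecture is uniform over \emph{all} monotone graph properties, while the Bollob\'as--Thomason theorem furnishes no structural information about $p_0(n)$. Any proof must therefore extract bounds purely from monotonicity and the combinatorial structure of the dependency graph, without invoking details specific to $\cP$. I expect that such a uniform argument, if it exists, will require a new correlation inequality controlling how a monotone event's probability shifts when a joint distribution with prescribed edge marginals is locally perturbed---effectively a quantitative ``$d$-dependent'' analogue of Bollob\'as--Thomason. Developing that inequality, perhaps via a switching argument or a refinement of the Lov\'asz Local Lemma adapted to monotone events, appears to be the main technical hurdle to resolving the conjecture in full generality.
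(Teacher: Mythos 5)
First, note that the statement you are addressing is not a theorem of the paper: it is posed as an open conjecture in the conclusion, conditional on the preceding (also open) conjecture that the lower and upper thresholds $L(n,d)$ and $U(n,d)$ exist at all. So there is no proof in the paper to compare against, and your text is, by your own admission, a research plan rather than a proof. Judged as such, it contains genuine gaps at every load-bearing step. The central one is the colouring argument for the upper bound: a largest colour class of the dependency graph is an adversarially structured set of roughly $\binom{n}{2}/(d+1)$ edge slots, and there is no reason a monotone property that holds almost surely for an \er graph at density $p_0 f_1(d)$ must hold for an independent-edge random subgraph of an \emph{arbitrary} such slot set---for connectivity, the class can simply omit every edge incident to some vertex, so the slice fails even at $p=1$, and monotonicity lifts nothing because the remaining edges of $G$ are exactly the ones you have no control over. (There is also a definitional snag: the paper's dependency-graph condition is stated pairwise, so an independent set in the dependency graph need not consist of mutually independent edges, which your ``slice is an \er-type graph'' step silently assumes.) The lower-bound half is a heuristic: you assert that fully-correlated bundles are the worst case among all $d$-dependent distributions, but establishing that is precisely the difficulty---the adversary is not restricted to bundle constructions, and no coupling is actually exhibited.

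Your ``concrete support'' from the paper is also misstated. Theorems \ref{theorem:connectivity} and \ref{theorem:connectivity examples} give $U(n,d) = \Theta\bigl((d+1)\ln(n)/n\bigr)$ (up to the $\sqrt{d+1}$ inside the logarithm), but the only proven lower bound on $L(n,d)$ is of order $1/n$; the paper explicitly flags the regime $1/n \leq p \leq \ln(n)/n$ as open, and closing it is exactly Conjecture \ref{conj:lower-c-thr}. So the known results give $U/L = O((d+1)\ln n)$ for connectivity, which is \emph{not} a function of $d$ alone, rather than the $U/L = O(1)$ you claim. In short, the sandwich $L \leq p_0 \leq U$ via the \er distribution is fine, and the reduction to bounding $U/p_0$ and $p_0/L$ separately is a reasonable framing, but both halves rest on unproved assertions, and you correctly identify at the end that a new, property-uniform correlation inequality would be needed---which is to say the conjecture remains open after your proposal as before it.
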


One way to interpret our results on connectivity is that introducing local edge dependencies allows us to correlate edges in such a way that a random graph remains unconnected even with higher $p$, up to $p = O(d\ln(n)/n)$.  We don't know if it is possible to \emph{anticorrelate} edges such that a graph almost surely is connected for smaller $p$.  We conjecture this is not possible:

\begin{conj}\label{conj:lower-c-thr}
    For all $d = O(n/\ln(n))$ and all $p \ll \ln(n)/n$, and for any $d$-dependent random graph distribution $G \sim \gnpd$, $G$ is almost surely not connected.
\end{conj}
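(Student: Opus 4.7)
The plan is to show that when $p \ll \ln(n)/n$, any $d$-dependent random graph distribution almost surely has an isolated vertex, which immediately implies non-connectivity. The case $np \to 0$ is already handled by Theorem \ref{theorem:connectivity}(ii), so I may assume $np = \Omega(1)$ and $np = o(\ln n)$. Under the conjecture's hypotheses $d = O(n/\ln n)$ and $p = o(\ln(n)/n)$, we have $p \dd = o(1)$, which is the regime where each edge is only weakly influenced by its dependents.

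The key step would be to establish a Suen-type lower bound on the isolation probability. Writing $A_e = \{e \text{ is present}\}$ for each edge $e \ni v$, we have $\mu_v := \sum_{e \ni v}\PP(A_e) = (n-1)p$, and because each edge has at most $d$ dependencies, only $O(nd)$ dependent pairs contribute to the pairwise correlation sum $\Delta_v := \sum_{e \sim f,\, e,f \ni v}\PP(A_e \cap A_f)$. The goal is a lower bound
\[
\PP(\deg(v) = 0) \;\geq\; \exp\!\bigl(-np(1+o(1))\bigr),
\]
matching the \er value $(1-p)^{n-1}$. The intuition is that the worst case occurs when the edges at $v$ are ``anticorrelated'' in groups of size $\dd$; even then, the constraint $p\dd = o(1)$ forces each such group to be edge-free with probability at least $1 - p\dd = 1 - o(1)$, and (assuming a clean partition into $(n-1)/\dd$ independent groups) the product bound matches the above.

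With this lower bound in hand, I would apply a second moment argument to the count $X$ of isolated vertices. Linearity gives $\E[X] \geq n\exp(-np(1+o(1))) = n^{1-o(1)} \to \infty$ since $np = o(\ln n)$, so showing $\PP(X \geq 1) \to 1$ reduces to bounding $\Var(X)$. The natural approach is to apply Janson's inequality (Lemma \ref{lemma:Janson inequalities}) to the $2n - 3$ edges meeting $\{u,v\}$ to estimate $\PP(\deg(u) = \deg(v) = 0)$, and then verify $\Var(X) = o(\E[X]^2)$.

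The principal obstacle is twofold. First, unlike the \er setting, a $d$-dependent distribution is not a product measure, so FKG/Harris-type arguments giving $\PP(\bigcap_e \overline{A_e}) \geq \prod_e \PP(\overline{A_e})$ do not directly apply; one must instead engineer a Suen-type inequality tailored to general dependency structures, and the partition-into-independent-groups intuition above need not correspond to any actual decomposition of the given distribution. Second, the error terms in both the Suen lower bound and the Janson upper bound for the pairwise event are naturally of order $\exp(O(np^2 d))$, and although $pd = o(1)$ by hypothesis, the product $np^2 d$ can be as large as $o(\ln n)$ rather than $o(1)$. Consequently, the ratio $\PP(\deg(u) = \deg(v) = 0)/\bigl(\PP(\deg(u) = 0)\PP(\deg(v) = 0)\bigr)$ governing the second moment may be as large as $n^{o(1)}$, which is insufficient for a direct variance argument. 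Overcoming this will likely require a sharper correlation inequality exploiting the precise dependency structure, a Chen--Stein or martingale approach that sidesteps the second moment, or an alternative bad event (such as small tree components) whose moments concentrate more tightly.
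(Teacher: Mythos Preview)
The statement you are attempting to prove is \emph{Conjecture}~\ref{conj:lower-c-thr} in the paper, not a theorem: the paper offers no proof, listing it among the open problems in the conclusion.  So there is nothing to compare your argument against.

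Your proposal is not a proof but rather a candid outline of the natural second-moment approach together with an honest accounting of why it fails.  The obstacles you identify are genuine.  In particular, your intuition that the ``worst case'' for $\PP(\deg(v)=0)$ occurs when the edges at $v$ are maximally anticorrelated in blocks of size $\dd$ is exactly the heuristic behind the conjecture, but making it rigorous is the whole difficulty: a $d$-dependent distribution need not decompose into independent blocks, and without positive association there is no FKG/Harris lower bound on $\PP(\bigcap_e \overline{A_e})$.  Suen's inequality as usually stated gives an \emph{upper} bound on the probability that no bad event occurs, not the lower bound you need here; and the Lov\'asz local lemma, which does give a lower bound, only yields $\PP(\deg(v)=0) \geq \prod_e (1-O(p))$ with constants that do not recover the sharp $e^{-np(1+o(1))}$ form required to make $\E[X]\to\infty$ when $np$ is close to $\ln n$.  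Your second obstacle---that the error factor $\exp(O(np^2 d))$ can be as large as $n^{o(1)}$, spoiling the variance computation---is also correctly diagnosed and is not obviously repairable by the tools in the paper.

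In short, you have accurately identified why the authors state this as a conjecture rather than a theorem; what you have written is a reasonable discussion of the difficulties, not a proof.
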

Couched in the language of threshold functions described above, Conjecture~\ref{conj:lower-c-thr} states that for the connectivity property, we have $L(n,d) = \Omega(\ln(n)/n)$.  This is the highest possible value of $L(n,d)$ since the \er random graph distribution is almost surely connected when $p \gg \ln(n)/n$.

Brody and Sanchez~\cite{mario_paper} showed a similar phenomenon holds for clique number---it's possible to \emph{correlate} edges to drive up the clique number of a dependent random graph, but it is not possible to \emph{anti-correlate} edges to make the clique number much smaller than it would be for the \er random graph. One possible reason for this phenomenon is because having a large clique is in a sense a local property of a graph, whereas \emph{not} having a large clique is a more global property.  We conjecture that this phenomenon is quite general---for global graph properties, $d$-dependent random graphs behave similarly to \er random graph distributions, whereas for local graph properties, introducing local dependencies can change the behavior of the graph.

It would be interesting to find similar threshold functions for subgraph containment.
\begin{prob}
For each fixed $H$, find a lower threshold function $L_{H}(n, d)$ with the property that for any $G \sim \gnpd$, if $p \ll L(n,d)$, then almost surely $G$ will not contain any copies of $H$.
\end{prob}
\noindent Addressing this even in the special case that $H$ is a triangle would likely be quite interesting.

Finally, for large $d$, we remark that we naturally lose all control over what's going on.  For instance, say there are integers $a,m \leq d$ where $pm = a$ and $m$ divides ${n \choose 2}$.  Then we can find a $d$-dependent random graph where the number of edges is always equal to exactly $p {n \choose 2}$.  Namely, partition the ${n \choose 2}$ edges into blocks of size $m$ and uniformly select $a$ edges from each block.  We suspect that in order to be able to make meaningful statements about $d$-dependent random graphs, we need $pd \leq 1$ and $d\leq n/\log(n)$, but we are unsure how to frame this as a precise conjecture.

\bibliographystyle{siam}
\bibliography{refs.bib}
\end{document}